\DeclareFontFamily{U}{wncy}{}
\DeclareFontShape{U}{wncy}{m}{n}{<->wncyr10}{}
\DeclareSymbolFont{mcy}{U}{wncy}{m}{n}
\DeclareMathSymbol{\sha}{\mathord}{mcy}{"58}
\theoremstyle{plain}
\newtheorem{theorem}{Theorem}[section]
\newtheorem{proposition}[theorem]{Proposition}
\newtheorem{lemma}[theorem]{Lemma}
\theoremstyle{definition}
\newtheorem{definition}[theorem]{Definition}
\theoremstyle{remark}
\newtheorem{remark}[theorem]{Remark}
\numberwithin{equation}{section}
\DeclareMathOperator{\Pic}{Pic}
\DeclareMathOperator{\Gal}{Gal}
\DeclareMathOperator{\PGL}{PGL}
\DeclareMathOperator{\GL}{GL}
\DeclareMathOperator{\Aut}{Aut}
\DeclareMathOperator{\Tr}{Tr}
\def\F{\mathbf{F}}
\def\G{\mathbf{G}}
\def\P{\mathbf{P}}
\def\Z{\mathbf{Z}}
\def\E{\mathbf{E}}
\def\C{\mathcal{C}}
\def\O{\mathcal{O}}
\def\B{\mathcal{B}}
\newcommand{\Id}{\textrm{Id}}
\newcommand{\map}[4]{\left\{ \begin{array}{ccc}
                                #1 & \longrightarrow & #2
                               \\ #3 & \longmapsto     & #4
                             \end{array}\right.}
\newcommand{\ratmap}[4]{\left\{ \begin{array}{ccc}
                                #1 & \dashrightarrow & #2
                               \\ #3 & \longmapsto     & #4
                             \end{array}\right.}
\renewcommand{\leq}{\leqslant}
\renewcommand{\geq}{\geqslant}
\title{Anticanonical codes from del Pezzo surfaces with Picard rank one}
\author[R. Blache]{Régis Blache}
\address{LAMIA, Université des Antilles}
\email{regis.blache@univ-antilles.fr}
\author[A. Couvreur]{Alain Couvreur}
\address{INRIA \& Laboratoire LIX, CNRS UMR 7161,\'Ecole Polytechnique}
\email{alain.couvreur@inria.fr}
\author[E. Hallouin]{Emmanuel Hallouin}
\address{Institut de Math\'ematiques de Toulouse, UMR 5219}
\email{hallouin@univ-tlse2.fr}
\author[D. Madore]{David Madore}
\address{LTCI Telecom ParisTech}
\email{david.madore@telecom-paristech.fr}
\author[J. Nardi]{Jade Nardi}
\address{Institut de Math\'ematiques de Toulouse, UMR 5219}
\email{Jade.Nardi@math.univ-toulouse.fr}
\author[M. Rambaud]{Matthieu Rambaud}
\address{LTCI Telecom ParisTech}
\email{matthieu.rambaud@telecom-paristech.fr}
\author[H. Randriam]{Hugues Randriam}
\thanks{The authors have been funded by ANR project ANR-15-CE39-0013 Manta}
\address{LTCI Telecom ParisTech}
\email{randriam@telecom-paristech.fr}
\begin{document}

\begin{abstract}
  We construct algebraic geometric codes from del Pezzo surfaces and
  focus on the ones having Picard rank one and the codes associated to
  the anticanonical class. We give explicit constructions of del Pezzo
  surfaces of degree $4, 5$ and $6$, compute the parameters of the
  associated anticanonical codes and study their isomorphisms arising
  from the automorphisms of the surface.
  We obtain codes with excellent parameters and some of them turn out
  to beat the best known codes listed on the database {\em codetable}.
\end{abstract}

\subjclass[2010]{14G50, 14J26, 94B27}

\keywords{Algebraic Geometric Codes, del Pezzo Surfaces}

\date{\today}

\maketitle

\markleft{\sc Blache, Couvreur, Hallouin, Madore, Nardi, Rambaud \& Randriam}

\section{Introduction}

The aim of this article is to construct codes from algebraic
surfaces. Codes coming from algebraic geometry have seen a growing
interest since their original contruction by Goppa.
We consider evaluation codes: given an algebraic variety $X$
defined over a finite field $\F_q$, and an effective divisor $D$ on
it, we evaluate the global sections in $H^0(X,\O_X(D))$ at the
rational points of $X$. We get a code of length $\sharp X(\F_q)$, the
number of rational points, and of dimension $h^0(X,\O_X(D))$ if the
evaluation map is injective. It remains to determine the minimum
distance.

Most of the algebraic geometry codes studied in the literature are
based on algebraic curves, since Riemann-Roch theorem for curves is a
powerful tool to estimate the dimension of the space of functions with
given poles, and the number of zeroes of such a function never exceeds
its number of poles.  The study of evaluation codes over higher
dimensional varieties is more difficult. On one hand, Riemann-Roch
theorem involves the dimensions of the higher cohomology spaces
$H^i(X,\O_X(D))$. On the other hand, one has to bound the maximal
number of rational points of the schemes defined by global sections of
$\O_X(D)$ in order to estimate the minimum distance. Such a scheme is
no longer zero-dimensional and might be singular or reducible. As a
consequence one has to use new tools for the estimation of its number
of rational points.

In the case of surfaces, one first has to determine the intersection
pairing. Then, Riemann-Roch theorem often gives a lower bound for the
dimension of the space of global sections. We want to use adjunction
formula, to get the arithmetic genus of the curves coming from the
zeroes of such a section, then to apply the Hasse-Weil-Serre bound on
the number of points of irreducible curves. In order to do this, one
also has to control their decomposition into irreducible components.

This can be done working in the Picard lattice of the surface $X$. In
this group, a decomposition of the zero scheme of a global section of
$D$ corresponds to writing the class of $D$ as a sum of classes of
effective divisors. A very efficient way to control this is to ask the
Picard lattice to have rank $1$ \cite[Lemma 2.1]{zarzar}.

We shall work on del Pezzo surfaces. A del Pezzo surface of degree $d$
over an algebraically closed field is obtained from the projective
plane $\P^2$ by blowing up $9-d$ points in general position (see
Section~\ref{delpezzo}). As a consequence, such surfaces are close to
the plane; the arithmetic and geometric theory of these objects has
been thoroughly studied (the classical reference is \cite{manin}) and
we can explicit all invariants above: the intersection pairing and the
(geometric) Picard lattice have fairly simple descriptions. They come
endowed with an action of the Frobenius automorphism, and the
(arithmetic) Picard lattice is the sublattice fixed by this action.

\subsection{Our contribution} We construct del Pezzo surfaces of
degree $4$, $5$ and $6$ having Picard rank $1$. In each case the
Picard lattice is generated by the canonical divisor $K_X$, and the
cone of effective divisors by the anticanonical divisor $-K_X$. We
call the code associated to this last divisor \emph{anticanonical}; it
has dimension $d+1$ (except for very small values of $q$), and all
zero schemes of global sections are irreducible with arithmetic genus
$1$ from the adjunction formula. As a consequence, they have at most
$q+1+\lfloor 2\sqrt{q}\rfloor$ points from the Hasse-Weil-Serre bound,
and this provides very good codes. Some of them turn out to beat the
best known codes listed in the database {\em codetables} \cite{code}.

A central role is played by the Frobenius action on the Picard lattice
of the surface, i.e. by the conjugacy class of the image of the Frobenius
in a certain Weyl group. It gives many properties of the surface, such as
the Picard rank or the number of rational points. Moreover, with the help of 
Galois cohomology, it enables us to determine the $\F_q$-rational 
automorphisms of the surface; since an automorphism of the
surface must preserve the (anti)canonical class, we deduce some
automorphisms of the codes.

Along the article, we try to be as constructive as possible. We give 
explicit descriptions of the anticanonical models. We also use
birational morphisms from our surfaces to the projective plane to
give explicit constructions of the codes, and Cremona
transformations to describe the automorphisms. 


\subsection{Related works}
In some sense, this paper fills a gap in the study of algebraic
geometric codes constructed from del Pezzo surfaces (even if most of
the authors cited below do not mention the fact that they work on del
Pezzo surfaces). When $X$ is the projective plane $\P^2$ (the only del
Pezzo surface of degree $9$ over a finite field), the Picard group is
generated by the class $L$ of a line, and the evaluation code
associated to $\O(mL)$ is the well-known projective Reed-Muller code
of order $m$ (see \cite{La88}). There are two types of
del Pezzo surfaces of degree $8$, one having Picard rank $2$ (the
hyperbolic quadric, isomorphic to $\P^1\times\P^1$), and the other
having Picard rank $1$ (the elliptic quadric). Codes over these
surfaces have been studied by Edoukou \cite{edou}, and Couvreur and
Duursma \cite{couvduur}. In \cite[Section~3.2]{couv}, the second
author constructs good codes coming from del Pezzo surfaces of degree
$6$ having Picard rank $2$. In \cite{ls}, Little and Schenck consider
anticanonical codes on del Pezzo surfaces of degree $3$ and $4$ having
Picard rank $1$. Boguslavsky \cite{bogus} gives the parameters of
anticanonical codes on \emph{split} del Pezzo surfaces, i.e. on
surfaces having maximal Picard rank.

Let us finally mention some works on codes on other blowups of the
plane \cite{davis,balli1}. These blowups are no longer del Pezzo
surfaces: since the blown up points lie on one or two lines, they are
not in general position. Moreover the evaluation set is not the set of
rational points of the surface, but the torus $\G_m^2(\F_q)$.

\subsection{Outline of the article} The paper is organized as
follows. In Section~\ref{delpezzo}, we recall the necessary material
on del Pezzo surfaces, and give the classification of such surfaces of
degrees $5$ and $6$ over a finite field. Then we construct degree $6$
del Pezzo surfaces in Section~\ref{delpezzo6}; we give the parameters
of the associated anticanonical codes, and determine the automorphisms
of the surfaces. We construct del Pezzo surfaces of degree $5$ and the
corresponding codes in Section~\ref{delpezzo5}. 
We give two geometric constructions of these codes: the first one by
evaluating linear forms at the rational points of a surface embedded
in $\P^5$ and the second one by evaluating particular quintics forms
at some rational points of the projective plane. Then, we determine
the automorphism group of the surface.  Finally, we construct del
Pezzo surfaces of degree $4$ in Section~\ref{degre4}, and
determine the parameters of the associated codes.

\subsection*{Acknowledgements}
The authors would like to thank Markus Grassl for pointing out the
existence of automorphisms of the codes.

\section{Context and notation}
In the following, we fix $\F_q$ a finite field of characteristic $p$
and an algebraic closure $\overline{\F}_q$.  We denote by $\sigma$ a
generator of the absolute Galois group
$G:=\Gal(\overline{\F}_q/\F_q)$.  The projective space of dimension
$r$ over $\F_q$ is denoted by $\P^r$.  On a surface $X$, the
intersection product of two divisor classes $A, B$ is denoted by
$A \cdot B$ and the self intersection $A \cdot A$ of $A$ is denoted by
$A ^{\cdot 2}$. Given an effective divisor $A$, the complete linear
system associated to $A$ is denoted by $|A|$, the Picard group of $X$
is denoted by $\textrm{Pic}(X)$ and the canonical class of $X$ is
denoted by $K_X$.

Given a smooth projective geometrically connected surface $X$ over
$\F_q$ and divisor $D$ on $X$, the code $\C(X(\F_q), D)$ is defined as the
image of the map
\[
  \map{H^0(X, \mathcal O_X(D))}{\F_q^n}{f}{(f(P))_{P\in X(\F_q)}.} 
\]
Note that for the map to be well--defined, one needs to order the
rational points of $X$, which can be done arbitrarily since the choice
of another order would provide an isometric code with respect to the
Hamming metric. Similarly, the evaluation of a global section of
$H^0(X, \mathcal O_X (D))$ at a point $P$ depends on the choice of a
generator of the stalk of the sheaf at $P$ but choosing another system
of generators would provide an isometric code. Since we are mostly
interested in the parameters of the code : its dimension and minimum
distance, it is sufficient to consider our code up to isometry.

\section{del Pezzo surfaces}
\label{delpezzo}

In this section we collect some definitions and well known facts about
del Pezzo surfaces, that we shall use in the sequel. For the proofs
and many other results, we refer the reader to \cite[Chapter 24
sq.]{manin}.

\begin{definition}
A smooth projective surface $X$ defined over a field $k$ is \emph{del
  Pezzo} when its anticanonical divisor $-K_X$ is ample. Its
\emph{degree} is the self-intersection number $d:=K_X^{\cdot 2}$.  
\end{definition}

We assume $3\leq d\leq 7$ in the following. We know from
\cite[Theorems 24.4, 24.5]{manin} that such a surface is isomorphic
(over the algebraic closure $\overline{k}$) to the blow-up of the
projective plane $\P^2$ at $r:=9-d$ points in \emph{general position},
i.e. such that no three of them are collinear, and no six lie on a
conic. In this case, the anticanonical divisor is very ample; the
space of its global sections has dimension
\[
  \dim H^0(X,\mathcal O(-K_X))=d+1=10-r,
\]
and it defines an embedding of $X$ into
$\P^d$, whose image has degree $d$. The image of this embedding is
called the \emph{anticanonical model} of $X$.

If $X'\rightarrow X$ is a birational morphism, and $X'$ is a del Pezzo
surface, then $X$ is a del Pezzo surface from \cite[Theorem 24.5.2
(i)]{manin}. 

Let $X$ be as above; it is isomorphic (over $\overline{k}$) to the
blowup $\pi$ of the projective plane at $p_1,\ldots,p_r$. Let
$E_0:=\pi^\ast H$ denote the pullback of the class of a line in $\P^2$
and $E_1, \ldots, E_r$, denote respectively the class of the
exceptional divisor of the blowup at $p_1, \ldots, p_r$. From
\cite[Theorems 25.1]{manin} (see also \cite[Proposition V.3.2]{hart}),
we know that the geometric Picard lattice $\Pic(X\otimes\overline{k})$
is free of rank $r+1$, with basis $E_0,\ldots,E_r$. The intersection
pairing is defined by
$$E_0^{\cdot 2}=1,~E_i^{\cdot 2}=-1,~i\geq 1,~E_i\cdot E_j=0,~i\neq j$$
and the class of the canonical divisor is
$$K_X=-3E_0+\sum_{i=1}^r E_i.$$

In the following, we assume that $X$ is defined over $\F_q$.
Associated to $X$ is a representation of $G$ on
$\Pic(X\otimes \overline{\F}_q)$ that respects the intersection
pairing and the canonical divisor. If
$\sigma^\ast\in\Aut(\Pic(X\otimes \overline{\F}_q))$ is the image of
$\sigma$ under this representation, then we know from a result of Weil
\cite[Theorem 27.1]{manin} that the number of rational points of $X$
is given by
$$\sharp  X(\F_q)=q^2+q\Tr(\sigma^\ast)+1.$$
Moreover, since $X$ is projective and smooth and the ground field is
finite, we have $\Pic(X)=\Pic(X\otimes \overline{\F}_q)^G$, and the
Picard rank is the multiplicity of the eigenvalue $1$ for
$\sigma^\ast$.

Following Dolgachev \cite[Section~8.2]{dolga}, we define the lattice
$\E_r$ as the orthogonal $K_X^\perp$ of the canonical divisor in
$\Pic(X\otimes \overline{\F}_q)$. This is a root lattice, whose Weyl
group is denoted $W(\E_r)$. The image of the above Galois
representation lies in $W(\E_r)$, and it is a finite quotient of $G$,
thus a cyclic subgroup.

Following Manin \cite{manin}, we define the \emph{type} of the del Pezzo surface $X$ as the conjugacy class of the element $\sigma^\ast$ in $W(\E_r)$.

\subsection{Isomorphism classes of del Pezzo surfaces of degrees five and six over a finite field}

For a del Pezzo surface of degree $5$ or $6$ over a perfect field, the
types of Manin are actually isomorphism classes (see \cite[Theorem
3.1.3]{skoro} for degree $5$, degree $6$ is treated in Section
\ref{aut6}). When the field $k$ is finite, we deduce that the
isomorphism classes of such del Pezzo surfaces correspond to the
conjugacy classes of elements in the Weyl group $W(\E_r)$.

We deduce the following classification for degree $5$ or $6$ del Pezzo
surfaces over finite fields \cite[Section~8.2]{dolga}, \cite[Section
3]{trepa}.

\subsubsection{Del Pezzo surfaces of degree $6$} When $d=6$, the root lattice is $\E_3=A_1 + A_2$, and its Weyl group
is
$$W(A_1)\times W(A_2)\simeq \mathfrak{S}_2\times \mathfrak{S}_3\simeq \mathfrak D_{12},$$
where $\mathfrak S_i$ denotes the symmetric group on $i$ letters and
$\mathfrak D_{12}$ denotes the dihedral group of order $12$, generated by
the symmetries $s_i:=s_{\alpha_i}$ with respect to the following roots
which form a principal system:

\[
    \alpha_1 =E_0-E_1-E_2-E_3;~ \alpha_2 =E_1-E_2;~ \alpha_3 =E_2-E_3.
\]
We have $s_1s_i=s_is_1$ for $i>1$ and
$(s_2s_3)^3=\Id$. Table~\ref{tab:degree6} summarizes the different
types of Del Pezzo surfaces with the corresponding Weyl conjugation
classes.

\begin{table}[!h]
  \centering
  \begin{tabular}{|c|c|c|c|c|}
\hline
Type & Weyl classes & Eigenvalues of $\sigma^\ast$ & $\Tr(\sigma^*)$ & Picard rank \\
\hline
\hline
$6_1$ & $\{Id\}$ & $1,1,1,1$ & 4 & 4 \\
\hline
$6_2$ & $\{s_1\}$ & $1,1,1,-1$ & 2 & 3\\
\hline
$6_3$ & $\{s_2,s_3,s_2s_3s_2\}$ & $1,1,1,-1$ & 2 & 3\\
\hline
$6_4$ & $\{s_2s_3,s_3s_2\}$ & $1,1,\jmath,\overline{\jmath}$ & 1 & 2\\
\hline
$6_5$ & $\{s_1s_2,s_1s_3,s_1s_2s_3s_2\}$ & $1,1,-1,-1$ & 0 & 2\\
\hline
$6_6$ & $\{s_1s_2s_3,s_1s_3s_2\}$ & $1,-1,\jmath,\overline{\jmath}$ & -1 & 1\\
\hline    
  \end{tabular}
  \bigskip
  \caption{Types of degree $6$ del Pezzo surfaces}
  \label{tab:degree6}
\end{table}

\subsubsection{Del Pezzo surfaces of degree $5$}
When $d=5$, the root lattice is $\E_4=A_4$, and its Weyl group is
$W(A_4)\simeq \mathfrak{S}_5$ the symmetric group on $5$ letters, generated by
the symmetries $s_i:=s_{\alpha_i}$, $1\leq i\leq 4$, with respect to
the roots
\[ 
\alpha_1=E_0-E_1-E_2-E_3; ~\alpha_2=E_1-E_2;~ \alpha_3=E_2-E_3;~ \alpha_4=E_3-E_4.   
\]
We identify these symmetries
respectively to the transpositions $(4,5),(1,2),(2,3)$ and $(3,4)$. The
conjugacy classes in $\mathfrak{S}_5$ identify to the partitions of $5$, and we
get the classification summarised in Table~\ref{tab:degree5}.

\begin{table}[!b]
  \centering
\begin{tabular}{|c|c|c|c|c|}
\hline
Type & Weyl classes & Eigenvalues of $\sigma^\ast$ & $\Tr (\sigma^*)$ & Picard rank \\
\hline
\hline
$5_1$ & $\{1,1,1,1,1\}$ & $1,1,1,1,1$ & 5 & 5\\
\hline
$5_2$ & $\{2,1,1,1\}$ & $1,1,1,1,-1$ & 3 & 4\\
\hline
$5_3$ & $\{3,1,1\}$ & $1,1,1,\jmath,\overline{\jmath}$ & 2 & 3\\
\hline
$5_4$ & $\{2,2,1\}$ & $1,1,1,-1,-1$ & 1 & 3 \\
\hline
$5_5$ & $\{3,2\}$ & $1,1,-1,\jmath,\overline{\jmath}$ & 0 & 2\\
\hline
$5_6$ & $\{4,1\}$ & $1,1,-1,\imath,-\imath$ & 1 & 2\\
\hline
$5_7$ & $\{5\}$ & $1,\zeta_5,\zeta_5^2,\zeta_5^3,\zeta_5^4$ & 0 & 1\\
\hline
\end{tabular}
\bigskip
  \caption{Types of degree $5$ del Pezzo surfaces}
  \label{tab:degree5}
\end{table}

We see that there exists exactly one isomorphism class of del Pezzo
surface having Picard rank $1$ in each degree $5$ or $6$,
corresponding respectively to the types $6_6$ and $5_7$. We construct
explicitely surfaces of these two types in the following sections.

We end this section with a technical result, that will be useful when
we estimate the minimum distance of the codes. It is close to
\cite[Theorem 3.3]{ls}.

\begin{lemma}
\label{fpc}
Assume that $X$ is a del Pezzo surface over $\F_q$, with
$\Pic(X)=\Z K_X$. If $C\in |-K_X|$ is an anticanonical curve that is
not absolutely irreducible, then we have $\sharp  C(\F_q)\leq 2$.
\end{lemma}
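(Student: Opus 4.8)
The plan is to reduce to a point count on the reduced curve $Y := C^{\mathrm{red}}$ (note $\#C(\F_q)=\#Y(\F_q)$, and since $\F_q$ is perfect the geometric irreducible components $C_1,\dots,C_s$ of $Y$, with $s\ge 2$, are exactly those of $C$), and then to feed $Y$ into the Grothendieck--Lefschetz trace formula. I record first that $C$ is connected: from $0\to\O_X(K_X)\to\O_X\to\O_C\to 0$ together with $h^0(\O_X(K_X))=0$ and $h^1(\O_X(K_X))=h^1(\O_X)=0$ (all true because $X$ is a rational surface and $-K_X$ is ample) one gets $h^0(\O_C)=1$; hence $Y$ is connected as well.

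The first step is to show that each $C_i$ is a rational curve. Put $a_i:=(-K_X)\cdot C_i$. Since $C$ is not absolutely irreducible, $C-C_i$ is a nonzero effective divisor, so ampleness of $-K_X$ yields $1\le a_i\le d-1$. Because $-K_X=3E_0-E_1-\cdots-E_r$ is a primitive class and $a_i<d=(-K_X)^{\cdot 2}$, the class of $C_i$ is not $\Q$-proportional to $-K_X$ in $\Pic(X\otimes\overline{\F}_q)\otimes\Q$, so the Hodge index theorem gives $d\cdot C_i^{\cdot 2}<a_i^2$. If one had $p_a(C_i)\ge 1$, adjunction would force $C_i^{\cdot 2}=2p_a(C_i)-2+a_i\ge a_i$ and hence $d\,C_i^{\cdot 2}\ge(a_i+1)a_i>a_i^2$, a contradiction; therefore $p_a(C_i)=0$, and in particular the normalization of each $C_i$ has geometric genus $0$.

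Next comes the one place where the hypothesis $\Pic(X)=\Z K_X$ is really used: no $C_i$ is defined over $\F_q$. Indeed, if $C_i$ were, its class would lie in $\Pic(X)=\Z K_X$; being effective and nonzero it would equal $nK_X$ with $n\le -1$, giving $a_i=-nd\ge d$, against the previous step. Consequently the Frobenius $\sigma$ permutes $C_1,\dots,C_s$ with no fixed point. Now apply $\#C(\F_q)=\#Y(\F_q)=\sum_{j=0}^{2}(-1)^j\Tr\bigl(\sigma\mid H^j(Y\otimes\overline{\F}_q,\Q_\ell)\bigr)$. Connectedness gives $\Tr(\sigma\mid H^0)=1$, and since $H^2$ is spanned by the classes of the components while $\sigma$ fixes none of them, $\Tr(\sigma\mid H^2)=0$. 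For $H^1$: as all geometric components of $Y$ are rational, $\Pic^0(Y\otimes\overline{\F}_q)$ is a linear (toric) group, so $H^1(Y\otimes\overline{\F}_q,\Q_\ell)$ is pure of weight $0$ — its Frobenius eigenvalues are algebraic integers all of whose archimedean conjugates have absolute value $1$, hence roots of unity — and its dimension is at most $p_a(C^{\mathrm{red}})\le p_a(C)=1$. Thus $\Tr(\sigma\mid H^1)$ is either $0$ or a single root of unity; being equal to the integer $1-\#C(\F_q)$, it lies in $\{-1,0,1\}$, whence $\#C(\F_q)\in\{0,1,2\}$.

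The main obstacle is really the first step and the purity it buys in the last one: it is precisely the rationality of the components — forced by Hodge index and $d\ge 3$ — that keeps $H^1$ of weight $0$; on a del Pezzo surface of smaller degree a component could be an elliptic curve and the conclusion would break. An elementary variant, in the spirit of \cite[Theorem 3.3]{ls}, avoids étale cohomology altogether: one shows that any $P\in C(\F_q)$ must lie on at least two of the $C_i$ (otherwise that single component would be $\sigma$-stable, hence $\F_q$-rational, contradicting the step above), uses the identity $\sum_{j\ne i}(C_i\cdot C_j)=2$ valid for every multiplicity-one component (read off from $-K_X\cdot C_i=\bigl(\sum_j m_jC_j\bigr)\cdot C_i$) together with $p_a(C)=1$ to see that the incidence graph of the $C_i$ is a tree or carries exactly one cycle, and then checks that a fixed-point-free $\sigma$-action on such a graph fixes at most two of its "crossing points"; the bound $2$ is attained, for instance by two conjugate conics on a degree-$4$ del Pezzo surface meeting at two $\F_q$-rational points.
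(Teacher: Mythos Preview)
Your argument is correct, but it takes a substantially heavier route than the paper's. The paper's proof is entirely elementary: since $\Pic(X)=\Z K_X$, the curve $C$ is irreducible over $\F_q$ and Galois permutes its $r\geq 2$ geometric components \emph{transitively}; hence any $\F_q$-point of $C$ lies on \emph{every} component. Assuming $C(\F_q)\neq\varnothing$ then forces $C_i\cdot C_j\geq 1$ for all $i\neq j$, and the arithmetic-genus identity
\[
1=\pi(-K_X)=\sum_i \pi(C_i)+\sum_{i<j}C_i\cdot C_j-(r-1)
\]
together with $\pi(C_i)\geq 0$ yields $\binom{r}{2}\leq r$, so $r\leq 3$; the bound $\sharp C(\F_q)\leq 2$ then drops out of $C_1\cdot C_2\leq 2$ (case $r=2$) or $C_i\cdot C_j=1$ (case $r=3$). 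No Hodge index theorem, no trace formula, no weights.

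What your approach buys is the extra structural fact that every geometric component is a smooth rational curve, and a proof that only needs the Galois action on components to be fixed-point-free rather than transitive. What the paper's approach buys is brevity and elementarity. Your closing ``elementary variant'' is in the paper's spirit but misses the key simplification: transitivity (not just fixed-point-freeness) on components puts every rational point in $\bigcap_i C_i$, which turns the endgame into a two-line case check rather than a graph-theoretic argument. One small caveat: the inequality $p_a(C^{\mathrm{red}})\leq p_a(C)$ you invoke is not valid for arbitrary effective divisors on a surface; here it is harmless because $C$ is in fact reduced --- the transitive Galois action forces all component multiplicities to coincide, and primitivity of $-K_X$ then pins them to $1$ --- but this deserves a sentence.
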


\begin{proof}
  Let $C=C_1\cup\cdots\cup C_r$ be the decomposition of $C$ into
  absolutely irreducible components. Since $C\in |-K_X|$ and $-K_X$
  generates the Picard group of $X$, $C$ is irreducible over
  $\F_q$. We deduce that we must have $r\geq 2$, and the components
  $C_i$ are cyclically permuted by $G$. As a consequence, every
  rational point must be at the intersection of all the $C_i$'s.

Assume that $C$ contains a rational point; from what we have just
said, we must have $C_i\cdot C_j\geq 1$ for any $i\neq j$. In the
Picard group, we get $-K_X=C_1+\cdots+C_r$, and the arithmetic genera
satisfy \cite[Ex V.1.3]{hart}
$$1=\pi(-K_X)=\sum_{i=1}^r\pi(C_i)+\sum_{1\leq i<j\leq r} C_i\cdot C_j-(r-1)$$
Since the $C_i$'s are absolutely irreducible, their arithmetic genera are
nonnegative and hence, we get
\[
  r(r-1)/2\leq\sum_{1\leq i<j\leq r} C_i\cdot C_j\ \leq r, \quad {\rm and}\quad
r\leq 3.
\]
If $r=2$, then $C_1\cdot C_2\leq 2$ and $C$ contains at most
two rational points.
If $r=3$, then $C_i\cdot C_j=1$ for any $i\neq j$, and
$C$ contains at most one rational point.
\end{proof}

\section{Anticanonical codes on some degree six del Pezzo surfaces}
\label{delpezzo6}

In this section, we construct some degree six del Pezzo surfaces with
Picard rank one over any finite field, then we determine the
parameters of the anticanonical codes on these surfaces.

\subsection{Construction of the surface}

We choose
\begin{itemize}
\item $p_1,p_2=p_1^\sigma$ two conjugate points in
  $\P^2(\F_{q^2})\backslash\P^2(\F_{q})$,
	\item $p_3,p_4=p_3^\sigma,p_5=p_3^{\sigma^2}$ three conjugate
      points in $\P^2(\F_{q^3})\backslash\P^2(\F_{q})$.
\end{itemize}

Recall that five points in the projective plane are in general
position when no three of them are collinear. For any prime power $q$
it is possible to choose $p_1,\ldots,p_5$ as above in general
position: from \cite[Lemma 2.4]{bfl} it is sufficient to choose them
on a smooth conic; such a curve exists for any $q$. Now if $C$ denotes
a smooth conic in the projective plane, we have
$\sharp C(\F_{q^i})=q^i+1$ for any $i\geq 1$, and $C$ has
$q^3-q\geq 6$ points defined over $\F_{q^3}$ but not over $\F_q$, and
$q^2-q\geq 2$ points defined over $\F_{q^2}$ but not over $\F_q$.

Let $\widetilde{X}$ denote the surface obtained from $\P^2$ after
blowing up the points $p_1,\ldots,p_5$, $E_1,\ldots,E_5$ the
corresponding exceptional divisors, and
$\pi:\widetilde{X}\mapsto \P^2$ the composition of the five
blowups. Since $\{p_1,\ldots,p_5\}$ is stable under the action of $G$,
the map $\pi$ and the surface $\widetilde{X}$ are defined over $\F_q$.

Applying the results in Section~\ref{delpezzo} to $\widetilde{X}$, we
get the following properties. It is a degree $4$ del Pezzo surface
with geometric Picard lattice
$\Pic(\widetilde{X}\otimes\overline{\F}_q)=\oplus_{i=0}^5 \Z E_i$, and
canonical class $K_{\widetilde{X}}=-3E_0+\sum_{i=1}^5 E_i$. From our
choice of the $p_i$'s, the map $\sigma^\ast$ acts on the $E_i$'s by the
permutation $(E_0)(E_1E_2)(E_3E_4E_5)$. As a consequence, the Picard
lattice of $\widetilde{X}$ is
\[
  \Pic(\widetilde{X})= \Z E_0+\Z(E_1+E_2)+\Z(E_3+E_4+E_5),
\]and we get
$\Tr(\sigma^\ast)=1$. Thus $\widetilde{X}$ has $q^2+q+1$ points.

In the following, we denote by $L$ the line $(p_1p_2)$, and by $C$ the
conic passing through $p_1,\ldots,p_5$ in $\P^2$; note that $C$ is
unique since we assumed the points to be in general position, and both
curves are defined over $\F_q$ from our choice of the $p_i$'s. Let
$\widetilde{L}$ and $\widetilde{C}$ denote the respective strict
transforms of $L$ and $C$ in $\widetilde{X}$; they are irreducible
curves defined over $\F_q$. Their images in $\Pic(\widetilde{X})$
satisfy
$$\widetilde{L}=E_0-E_1-E_2,~\widetilde{C}=2E_0-\sum_{i=1}^5 E_i.$$
We get $\widetilde{L}^{\cdot 2}=\widetilde{C}^{\cdot 2}=-1$, and they
have arithmetic genus zero from the adjunction formula. Moreover they
are disjoint since $\widetilde{L}\cdot \widetilde{C}=0$.

From Castelnuovo's contractibility criterion \cite[Theorem
3.30]{bade}, \cite[Theorem 21.5]{manin}, there exists a smooth
projective surface $X$, and a birational morphism
$\psi:\widetilde{X}\rightarrow X$ contracting $\widetilde{L}$ and
$\widetilde{C}$ to points $l=\psi(\widetilde{L})$ and
$c=\psi(\widetilde{C})$. In other words, the map $\psi$ is the
composition of the blowups of $X$ at $l$ and $c$, and $\widetilde{L}$,
$\widetilde{C}$ are the corresponding exceptional divisors. We have
the diagram

\begin{equation*}
\xymatrix{
& \widetilde{X} \ar@{->}_\pi[dl] \ar@{->}^\psi[dr]& \\
\P^2 &  & X\\
}
\end{equation*}

\begin{lemma}
\label{Picgeom}
The geometric Picard lattice $\Pic(X\otimes\overline{\F}_q)$
identifies to the sublattice of
$\Pic(\widetilde{X}\otimes\overline{\F}_q)$ generated by the classes
\begin{align*}
  F_0&=3E_0-2E_1-\sum_{i=2}^5E_i;\\
  F_1&=E_0-E_1-E_3;\\
  F_2&=E_0-E_1-E_4;\\
  F_3&=E_0-E_1-E_5,
\end{align*}
which satisfy $F_0^{\cdot 2}=-F_i^{\cdot 2}=1$, $i\geq 1$, and
$F_i\cdot F_j =0$, $i\neq j$.
\end{lemma}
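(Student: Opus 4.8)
The plan is to exploit the fact, recorded just above the statement, that $\psi\colon\widetilde{X}\to X$ is the composition of the blow-ups of $X$ at the two distinct points $l$ and $c$, so that the standard description of the Picard group of a blow-up of a smooth surface (\cite[Proposition V.3.2]{hart}) applies. Over $\overline{\F}_q$ it yields an orthogonal direct sum decomposition
\[
  \Pic(\widetilde{X}\otimes\overline{\F}_q)=\psi^\ast\Pic(X\otimes\overline{\F}_q)\ \oplus\ \Z\widetilde{L}\ \oplus\ \Z\widetilde{C},
\]
and $\psi^\ast$ is an isometric embedding, since $\psi^\ast A\cdot\psi^\ast B=A\cdot B$ by the projection formula. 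I shall therefore identify $\Pic(X\otimes\overline{\F}_q)$ with its image $\psi^\ast\Pic(X\otimes\overline{\F}_q)$; it then remains to check that this image is the sublattice $M:=\langle F_0,F_1,F_2,F_3\rangle$ and to read off the intersection form from there. Because the summand $\Z\widetilde{L}\oplus\Z\widetilde{C}$ is unimodular (its Gram matrix being $\operatorname{diag}(-1,-1)$), its orthogonal complement $N:=\langle\widetilde{L},\widetilde{C}\rangle^{\perp}$ inside $\Pic(\widetilde{X}\otimes\overline{\F}_q)$ is precisely the other summand, that is, $\psi^\ast\Pic(X\otimes\overline{\F}_q)=N$.

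So the task reduces to showing $M=N$. Writing an arbitrary class as $aE_0+\sum_{i=1}^5 b_iE_i$, substituting $\widetilde{L}=E_0-E_1-E_2$, $\widetilde{C}=2E_0-\sum_{i=1}^5E_i$, and using the diagonal intersection pairing on $E_0,\dots,E_5$, one finds that orthogonality to both $\widetilde{L}$ and $\widetilde{C}$ is equivalent to the two linear conditions $b_1+b_2=-a$ and $b_3+b_4+b_5=-a$; in particular $N$ has rank $4$. A direct substitution shows $F_0,\dots,F_3$ satisfy these conditions, hence $M\subseteq N$. To upgrade this to equality over $\Z$ I will either solve the system explicitly — given $v=aE_0+\sum b_iE_i\in N$, one checks that $v=(a+b_1)F_0+\beta_1F_1+\beta_2F_2+\beta_3F_3$ with all $\beta_i\in\Z$ — or, more economically, compare discriminants: $N$ is unimodular, being the orthogonal complement of a unimodular sublattice in the unimodular lattice $\Pic(\widetilde{X}\otimes\overline{\F}_q)$ (of Gram matrix $\operatorname{diag}(1,-1,-1,-1,-1,-1)$), while the Gram matrix of $(F_0,F_1,F_2,F_3)$ is $\operatorname{diag}(1,-1,-1,-1)$, of determinant $-1$; hence $[N:M]^2=1$ and $M=N$. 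That same Gram computation, namely $F_0^{\cdot2}=9-4-4=1$, $F_i^{\cdot2}=1-1-1=-1$ for $i\geq1$, and the vanishing of the remaining products by a direct check, yields the stated values of the intersection pairing on $F_0,\dots,F_3$.

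The only ingredient here that is not pure lattice arithmetic is the structure of $\Pic$ under the birational morphism $\psi$; but this is classical and, crucially, we already know from Castelnuovo's criterion (applied above) that $\psi$ is a genuine blow-up at two points. Thus the proof is essentially a short computation in the free $\Z$-lattice $\bigoplus_{i=0}^5\Z E_i$, the only mild subtlety being to argue the spanning statement $M=N$ integrally rather than rationally — which the discriminant comparison, or equivalently the explicit integral solution of the linear system, settles.
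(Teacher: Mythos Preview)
Your proof is correct and follows essentially the same route as the paper: identify $\psi^\ast\Pic(X\otimes\overline{\F}_q)$ with $\langle\widetilde{L},\widetilde{C}\rangle^\perp$ via \cite[Proposition V.3.2]{hart}, then check that $F_0,\dots,F_3$ lie in this orthogonal and form an integral basis. The only stylistic difference is that the paper \emph{discovers} the $F_i$'s by successively solving for self-intersection $\pm1$ classes orthogonal to the previous ones, and then concludes with a one-line ``from their intersection products'', whereas you verify the given classes directly and make the integral spanning argument explicit via the discriminant comparison (or the explicit solution $\alpha_0=a+b_1$, etc.); your version is in fact a bit more careful on that last point.
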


\begin{proof}
We use \cite[Proposition V.3.2]{hart}.
Since $\psi$ is the composition of two blowups, the map $\psi^\ast$
from $\Pic(X\otimes\overline{\F}_q)$ to
$\Pic(\widetilde{X}\otimes\overline{\F}_q)$ is an isometry for the
intersection pairing; as this pairing is non degenerate, it is an
injection, and $\Pic(X\otimes\overline{\F}_q)$ identifies to its
image.

The classes of the exceptional divisors are $\widetilde{L}$ and
$\widetilde{C}$; as a consequence, the image
$\Pic(X\otimes\overline{\F}_q)$ is the orthogonal of
$\Z\widetilde{L}+\Z\widetilde{C}$ in
$\Pic(\widetilde{X}\otimes\overline{\F}_q)$.
A class $a_0E_0-\sum_{i=1}^5 a_iE_i$ is in the orthogonal
if, and only if
\[a_0=a_1+a_2=a_3+a_4+a_5.
\]
When we look for classes having self-intersection $\pm 1$, we get the
condition
\[a_0^2-\sum_{i=1}^5 a_i^2=\pm 1.
\]
We get the class $F_1$,
and any class orthogonal to $F_1$ must satisfy $a_0=a_1+a_3$. The
class $F_2$ satisfies the three equalities above, and orthogonality
adds the equation $a_0=a_1+a_4$. We get $F_3$, and the last equation
$a_0=a_1+a_5$. The five equations above finally give $F_0$.
The classes $F_0,\ldots,F_3$ clearly form a basis of
$\Pic(X\otimes\overline{\F}_q)$ from their intersection products.
\end{proof}

We determine the canonical divisor class of $X$.

\begin{lemma}
  In the Picard lattice of $X$, the class of the canonical divisor is
  \[K_X=-3F_0+F_1+F_2+F_3.\]
\end{lemma}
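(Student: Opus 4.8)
The plan is to compute $K_X$ by pulling back along $\psi$ and using the known formula for $K_{\widetilde X}$. Since $\psi\colon \widetilde X\to X$ is the composition of two blowups with exceptional divisors $\widetilde L$ and $\widetilde C$, the standard formula for the canonical class of a blowup gives $K_{\widetilde X}=\psi^\ast K_X + \widetilde L + \widetilde C$, hence $\psi^\ast K_X = K_{\widetilde X} - \widetilde L - \widetilde C$. Under the identification of $\Pic(X\otimes\overline{\F}_q)$ with its image in $\Pic(\widetilde X\otimes\overline{\F}_q)$ provided by Lemma~\ref{Picgeom}, this means $K_X = K_{\widetilde X} - \widetilde L - \widetilde C$, viewed as an element of the sublattice spanned by $F_0,\dots,F_3$.

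The next step is a direct substitution. We have $K_{\widetilde X}=-3E_0+\sum_{i=1}^5 E_i$, $\widetilde L = E_0-E_1-E_2$, and $\widetilde C = 2E_0-\sum_{i=1}^5 E_i$, so
\[
K_X = \bigl(-3E_0+\textstyle\sum_{i=1}^5 E_i\bigr) - (E_0-E_1-E_2) - \bigl(2E_0-\textstyle\sum_{i=1}^5 E_i\bigr) = -6E_0 + 2E_1+2E_2+2\textstyle\sum_{i=3}^5 E_i.
\]
It then remains to check that $-3F_0+F_1+F_2+F_3$ equals this same class in terms of the $E_i$. Using the definitions of the $F_j$ from Lemma~\ref{Picgeom}, one finds $-3F_0 = -9E_0+6E_1+3\sum_{i=2}^5 E_i$ and $F_1+F_2+F_3 = 3E_0-3E_1-E_3-E_4-E_5$, whose sum is $-6E_0+3E_1+3E_2+2E_3+2E_4+2E_5$. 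This does \emph{not} match, which signals that the correct expansion must be done carefully; I would recompute $-3F_0+F_1+F_2+F_3$ paying attention to the coefficient of $E_1$, since $F_0$ has coefficient $-2$ on $E_1$ while $F_1,F_2,F_3$ each have $-1$, and reconcile with the $K_{\widetilde X}-\widetilde L-\widetilde C$ computation. An alternative, perhaps cleaner, route is to verify the claimed formula intrinsically: check that $-3F_0+F_1+F_2+F_3$ has the right self-intersection $K_X^{\cdot 2}=6$ (consistent with $X$ being del Pezzo of degree $6$) and the right intersection numbers against a spanning set, e.g. that it pairs correctly with $\widetilde L$ and $\widetilde C$ (giving $-1$ each, as exceptional curves of $\psi$) and with $F_0,\dots,F_3$.

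The main obstacle is simply bookkeeping: keeping the two bases $(E_0,\dots,E_5)$ and $(F_0,\dots,F_3)$ straight and not dropping a sign or coefficient when expressing $K_X$ in the $F$-basis. There is no conceptual difficulty — the adjunction/blowup formula for the canonical class, together with Lemma~\ref{Picgeom}, reduces everything to linear algebra in a rank-$6$ lattice. The cleanest presentation is probably to state $K_X=K_{\widetilde X}-\widetilde L-\widetilde C$ (quoting \cite[Proposition V.3.3]{hart} or the analogous statement in \cite{manin} for the canonical class under blowup), substitute, and then observe that the resulting class coincides with $-3F_0+F_1+F_2+F_3$ by writing both in the $E$-basis; any residual discrepancy in my trial computation above must be an arithmetic slip to be fixed when writing out the final argument.
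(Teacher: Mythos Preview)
Your approach is exactly the paper's: use $K_{\widetilde X}=\psi^\ast K_X+\widetilde L+\widetilde C$ (Hartshorne~V.3.3), solve for $\psi^\ast K_X$, and match against $-3F_0+F_1+F_2+F_3$ in the $E$-basis.

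The only issue is that you have located your arithmetic slip on the wrong side. Your expansion of $-3F_0+F_1+F_2+F_3$ as $-6E_0+3E_1+3E_2+2E_3+2E_4+2E_5$ is \emph{correct}; the error is in the computation of $K_{\widetilde X}-\widetilde L-\widetilde C$. For the $E_1$-coefficient you should get $1-(-1)-(-1)=3$, not $2$, and likewise for $E_2$. With this fixed, both sides equal $-6E_0+3(E_1+E_2)+2(E_3+E_4+E_5)$, which is precisely what the paper records, and the proof is complete without any need for the intersection-number sanity checks you propose as a fallback.
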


\begin{proof}
  From \cite[Proposition V.3.3]{hart}, we have
  $K_{\widetilde{X}}=\psi^\ast K_X+\widetilde{L}+\widetilde{C}$ in
  $\Pic(\widetilde{X})$. We get
  $\psi^\ast K_X=-6E_0+3(E_1+E_2)+2(E_3+E_4+E_5)$, and the result
  comes from the identification of the lemma above.
\end{proof}

We are ready to prove the main properties of our surface.

\begin{proposition}
\label{ratX}
The surface $X$ has the following properties
\begin{enumerate}[(i)]
	\item it is a degree $6$ del Pezzo surface;
	\item\label{item:ii} its Picard lattice $\Pic(X)$ has rank one, and is
      generated by $K_X$;
	\item\label{item:iii} it is defined over $\F_q$, and has $q^2-q+1$ rational
      points.
\end{enumerate}
\end{proposition}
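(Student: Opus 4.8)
**The plan is to verify each of the three claims using the explicit description of $\Pic(X\otimes\overline{\F}_q)$ and the canonical class established in the two preceding lemmas, together with the results recalled in Section~\ref{delpezzo}.**

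For claim (i), the strategy is to use the birational morphism $\psi:\widetilde{X}\to X$. Since $\widetilde{X}$ is a degree $4$ del Pezzo surface and $\psi$ is a birational morphism, the result \cite[Theorem 24.5.2 (i)]{manin} quoted in the text immediately gives that $X$ is a del Pezzo surface. Its degree is $K_X^{\cdot 2}$, which I would compute directly from $K_X=-3F_0+F_1+F_2+F_3$ and the intersection relations $F_0^{\cdot 2}=1$, $F_i^{\cdot 2}=-1$, $F_i\cdot F_j=0$ of Lemma~\ref{Picgeom}; this yields $9-1-1-1=6$.

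For claim (ii), I would first determine the Galois action. The permutation $(E_0)(E_1E_2)(E_3E_4E_5)$ of $\sigma^\ast$ on $\Pic(\widetilde{X}\otimes\overline{\F}_q)$ does not fix the generators $F_0,\dots,F_3$ of the sublattice $\Pic(X\otimes\overline{\F}_q)$, so I would recompute $\sigma^\ast F_i$ and express each image in the basis $F_0,\dots,F_3$; alternatively, one can note that the sublattice is Galois-stable (it is $(\Z\widetilde L + \Z\widetilde C)^\perp$ and both $\widetilde L,\widetilde C$ are defined over $\F_q$) and read off the induced action. The expectation is that the eigenvalues of $\sigma^\ast$ on this rank-$4$ lattice are $1,-1,\jmath,\overline{\jmath}$, matching type $6_6$ of Table~\ref{tab:degree6}. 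Then $\Pic(X)=\Pic(X\otimes\overline{\F}_q)^G$ has rank equal to the multiplicity of the eigenvalue $1$, namely $1$. To see that $K_X$ generates it (not merely a finite-index subgroup), I would use that $-K_X$ is anticanonical and that $\Pic$ of a del Pezzo surface is torsion-free; since $K_X$ is $G$-invariant and primitive in $\Pic(X\otimes\overline{\F}_q)$ (one checks $\frac{1}{m}K_X$ is not integral for $m>1$ from the coordinates $-3,1,1,1$), it generates the rank-one lattice $\Pic(X)$. For claim (iii): $X$ is defined over $\F_q$ because $\psi$ contracts $\widetilde L$ and $\widetilde C$, which are defined over $\F_q$, so Castelnuovo's criterion produces $X$ and $\psi$ over $\F_q$; and $\sharp X(\F_q)=q^2+q\Tr(\sigma^\ast)+1$ by Weil's formula \cite[Theorem 27.1]{manin}, with $\Tr(\sigma^\ast)=-1$ from the eigenvalue computation, giving $q^2-q+1$.

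The main obstacle I anticipate is the bookkeeping in claim (ii): correctly computing the matrix of $\sigma^\ast$ restricted to the sublattice generated by $F_0,\dots,F_3$ and confirming both the eigenvalue $1$ has multiplicity exactly one and that $K_X$ is a primitive vector generating $\Pic(X)$ rather than an index-$k$ sublattice. Everything else is either a direct invocation of a cited theorem or a short intersection-number calculation.
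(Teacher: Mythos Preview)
Your proposal is correct and follows essentially the same route as the paper. The only notable difference is in (ii): the paper bypasses the eigenvalue computation and the separate primitivity argument by directly solving $D^{\sigma}=D$ for $D=\sum a_iF_i$, obtaining $a_1=a_2=a_3$ and $a_0=-3a_1$, which immediately shows the fixed sublattice equals $\Z K_X$; this is a bit cleaner than your two-step argument, but the content is the same.
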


\begin{proof}
  As $\psi$ is a birational morphism from a del Pezzo surface, we know
  from Section~\ref{delpezzo} that $X$ is a del Pezzo surface. Its
  degree is $K_X^{\cdot 2}=6$.

  Recall that we have $\Pic(X)=\Pic(X\otimes\overline{\F}_q)^G$. From
  the description of the action of $\sigma^\ast$ on the $E_i$'s, we
  deduce that $D=\sum_{i=0}^3 a_iF_i$ satisfies $D^\sigma=D$ if, and
  only if we have $a_1=a_2=a_3$ and $a_0=-3a_1$, i.e. $D=a_1K_X$. This
  proves assertion (\ref{item:ii}).

  Finally, $X$ is defined over $\F_q$ since the canonical class is. To
  compute the number of its rational points, we use Weil's result from
  Section~\ref{delpezzo}. The matrix of the action of Frobenius on
  $\Pic(X\otimes\overline{\F}_q)$, with respect to the basis
  $\{F_0,F_1,F_2,F_3\}$ is
\begin{equation}\label{Action_Frob_Deg-6}
M = \left(\begin{array}{cccc}
2 & 1 & 1 & 1 \\
-1 & -1 & -1 & 0\\
-1 & 0 & -1 & -1 \\
- 1 & -1 & 0 & -1 \\
\end{array}\right)
\end{equation}
whose trace is $-1$.
\end{proof}

\subsection{Anticanonical codes}
Here we determine the parameters of the evaluation code
$\C(X(\F_q),-K_X)$. 

Its length is $\sharp X(\F_q)=q^2-q+1$ from Proposition \ref{ratX}
(\ref{item:iii}).  The dimension of the space of global sections is
$h^0(X,-K_X)=d+1=7$ from Section~\ref{delpezzo}. We will see below
that the evaluation map is injective for any $q\geq 4$, and in this
case we get a code of dimension $7$.

Let $D\in |-K_X|$. Since $-K_X$ generates the Picard lattice of $X$,
the curve $D$ is irreducible over $\F_q$. If it is absolutely
irreducible, the adjunction formula gives
\[
  2p_a(D)-2=D\cdot(D+K_X)=-K_X\cdot(-K_X+K_X)=0.
\]
Thus, $D$ has
arithmetic genus $1$ and contains at most
$q+1+\lfloor 2\sqrt{q}\rfloor$ rational points.

If~$D \in \left|-K_X\right|$ is irreducible but not absolutely
irreducible, we apply Lemma~\ref{fpc}. Hence,
the maximal number of points of a section comes from an absolutely
irreducible one, and the minimum distance is at least
$q^2-2q-\lfloor 2\sqrt{q}\rfloor$. This number is positive as long as
$q\geq 4$, and we get

\begin{proposition}
\label{code6}
Assume we have $q\geq 4$. Then the code $\C(X,-K_X)$ has parameters
\[
  [q^2-q+1,7,\geq q^2-2q-\lfloor 2\sqrt{q}\rfloor].
\]
\end{proposition}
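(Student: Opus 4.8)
The plan is to assemble the three parameters $[n,k,d]$ from the facts already established in the section, so the "proof" is mostly a matter of collecting statements and checking the injectivity of the evaluation map. The length $n = q^2 - q + 1$ is immediate from Proposition~\ref{ratX}(\ref{item:iii}). For the dimension, recall from Section~\ref{delpezzo} that $h^0(X, \mathcal{O}_X(-K_X)) = d+1 = 7$ since $X$ is a del Pezzo surface of degree $6$; the only thing to verify is that the evaluation map $H^0(X, \mathcal{O}_X(-K_X)) \to \F_q^n$ is injective, i.e. that no nonzero global section vanishes at every one of the $q^2 - q + 1$ rational points. This is where a genuine (if short) argument is needed.

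For injectivity I would argue by contradiction: suppose a nonzero $f \in H^0(X, \mathcal{O}_X(-K_X))$ vanishes on all of $X(\F_q)$, and let $D$ be its zero divisor, an effective anticanonical curve. If $D$ is absolutely irreducible, then by the adjunction computation done just above the statement, $D$ has arithmetic genus $1$, hence at most $q + 1 + \lfloor 2\sqrt q\rfloor$ rational points; but then $X(\F_q) \subseteq D(\F_q)$ forces $q^2 - q + 1 \leq q + 1 + \lfloor 2\sqrt q \rfloor$, which fails for $q \geq 4$. If $D$ is irreducible over $\F_q$ but not absolutely irreducible, Lemma~\ref{fpc} gives $\sharp D(\F_q) \leq 2$, and again $q^2 - q + 1 \leq 2$ is impossible for $q \geq 4$. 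Since $-K_X$ generates $\Pic(X)$, these are the only two cases (there is no reducible-over-$\F_q$ possibility), so no such $f$ exists and the evaluation map is injective; thus $k = 7$.

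For the minimum distance, the weight of the codeword associated to a nonzero $f$ is $n$ minus the number of rational points of its zero divisor $D \in |-K_X|$. By exactly the same dichotomy: an absolutely irreducible $D$ contributes at most $q + 1 + \lfloor 2\sqrt q\rfloor$ points by Hasse--Weil--Serre together with the genus-$1$ adjunction bound, while a non-absolutely-irreducible $D$ contributes at most $2 \leq q + 1 + \lfloor 2\sqrt q\rfloor$ points by Lemma~\ref{fpc}. Hence every nonzero codeword has weight at least $q^2 - q + 1 - (q + 1 + \lfloor 2\sqrt q\rfloor) = q^2 - 2q - \lfloor 2\sqrt q\rfloor$, giving $d \geq q^2 - 2q - \lfloor 2\sqrt q\rfloor$.

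There is no real obstacle here; the content is entirely front-loaded into Proposition~\ref{ratX}, Lemma~\ref{fpc}, the adjunction formula, and the Hasse--Weil--Serre bound. The one point deserving a careful line is that the rank-one hypothesis $\Pic(X) = \Z K_X$ is what rules out a decomposition $D = D_1 + D_2$ with each $D_i$ effective and defined over $\F_q$ — without it the genus bound would not apply to $D$ as a whole and Lemma~\ref{fpc} would not be available. I would state that observation explicitly and otherwise keep the proof to a few lines, perhaps noting in a remark that the bound $d \geq q^2 - 2q - \lfloor 2\sqrt q\rfloor$ is what makes the code interesting precisely when this quantity is close to $n$, i.e. for $q$ not too large.
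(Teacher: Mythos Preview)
Your proposal is correct and follows essentially the same approach as the paper: length from Proposition~\ref{ratX}(\ref{item:iii}), dimension $h^0=7$ from the del Pezzo theory, and the minimum-distance bound via the dichotomy (absolutely irreducible anticanonical curve $\Rightarrow$ genus~$1$ and Hasse--Weil--Serre; otherwise Lemma~\ref{fpc}), with the rank-one hypothesis ensuring every $D\in|-K_X|$ is irreducible over $\F_q$. The only cosmetic difference is that the paper deduces injectivity of the evaluation map in one stroke from the positivity of $q^2-2q-\lfloor 2\sqrt q\rfloor$ for $q\geq 4$, whereas you spell it out as a separate contradiction argument; the content is the same.
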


Parameters of the codes for small values of $q$ are summarized in
Table~\ref{tab:param6}. For $q\in\{5,7,8,9\}$ these codes attain
the parameters of the best known codes listed in {\em codetable}
databse\cite{code}.
\begin{table}[!h]
  \centering
  \begin{tabular}{|c|c|c|c|c|c|}
\hline
$q$ & 4 & 5 & 7 & 8 & 9 \\
\hline
     $[n,k,d]$ & $[13,7, \geq 4]$ & $[21,7,\geq 11]$ & $[43,7,\geq 30]$ &
                  $[57,7,\geq 43]$ & $[73,7,\geq 57]$\\
\hline    
  \end{tabular}
  \medskip
  \caption{Parameters of codes from our degree 6 del Pezzo surface}
  \label{tab:param6}
\end{table}

\begin{remark}
\label{rema}
For
$q=4$, {\tt magma} \cite{mag} calculations give a minimum distance
equal to $5$, instead of $4$, for all (random) choices of the points
$p_i$ we made. Actually the anticanonical system does not carry any
maximal elliptic curve, as we shall prove in Section~\ref{q4}. This is no
longer true when $q\geq
5$: the minimum distances we observe are those given in the above
Proposition.

\end{remark}

\subsection{Automorphisms of the surface}
\label{aut6}

Asking {\tt magma} for the automorphism groups of these codes, we get
respectively groups of order $234,504,1548$ for $q=4,5,7$. Among these
are the $q-1$ multiplications by scalars. The aim of this section is
to show that the remaining $6(q^2-q+1)$ automorphisms come from
automorphisms of the surface.

We first describe the geometric group of automorphisms of the split
degree $6$ del Pezzo surface $X_0$ obtained by blowing up the
projective plane at the points $(1:0:0)$, $(0:1:0)$ and
$(0:0:1)$. Note that $X_0$ is a toric variety whose maximal torus we
note $\G_m^2$. From \cite[Theorem 8.4.2]{dolga}, we have
$$\Aut(X_0)=\mathfrak D_{12}\ltimes (\overline{\F}_q^\ast)^2$$
where $\mathfrak D_{12}$ is the dihedral group of order $12$, which is the Weyl
group $W(\E_3)$ of isometries of the geometric Picard lattice of a
degree $6$ del Pezzo surface.

We first describe the action of these automorphisms on the maximal
torus $\G_m^2$; since the group $\mathfrak D_{12}$ is generated by the
permutations (of the projective coordinates) $g_1=(12)$, $g_2=(123)$,
and the standard quadratic transform $g_3$, we get
$$g_1(x,y)=(y,x),~g_2(x,y)=\left(\frac{y}{x},\frac{1}{x}\right),~g_3(x,y)=\left(\frac{1}{x},\frac{1}{y}\right)$$
and $(a,b)\in (\overline{\F}_q^\ast)^2$ acts by $(a,b)(x,y)=(ax,by)$.

For any $g\in \mathfrak D_{12}$ and $(a,b)\in (\overline{\F}_q^\ast)^2$, set
$g\cdot(a,b):=g\circ(a,b)$ the composition of the above actions. One
easily verifies the following assertions 
 \begin{itemize}
 \item we have $g(a,b)=g\cdot (a,b)\cdot g^{-1}$, where $g(a,b)$ is the
   image of $(a,b)$ by the action of $g$ described above.
 \item the action of $\sigma$ on $\Aut(X_0)$ is given by
   $^\sigma(g\cdot (a,b))=g\cdot (a^q,b^q)$, since the surface $X_0$
   is split and the action of the Weyl group is defined over $\F_q$.
 \end{itemize}

 We now determine $H^1(G,\Aut(X_0))$, which gives the isomorphism
 classes of degree $6$ del Pezzo surfaces over $\F_q$ (all these
 surfaces become isomorphic over the algebraic closure
 $\overline{\F}_q$). We have a short exact sequence (with a trivial
 Galois action on the diedral group)
$$1\rightarrow (\overline{\F}_q^\ast)^2 \rightarrow \Aut(X_0) \rightarrow \mathfrak D_{12} \rightarrow 1$$
which is split by the map $g\mapsto g\cdot(1,1)$.
From \cite[I.5.5 Proposition 38]{serre}, we have a map
\[
  H^1(G,\Aut(X_0))\rightarrow H^1(G,\mathfrak D_{12})
\]
and, by functoriality of
$H^1$, the splitting above ensures us that this map is surjective. In
order to show that it is injective, it is sufficient to show that for
any twist $_g(\overline{\F}_q^\ast)^2$, the cohomology group
$H^1(G,\ \!\!\! _g(\overline{\F}_q^\ast)^2)$ vanishes \cite[I.5.5
Corollaire 2]{serre}. But this is a consequence of Lang's theorem
\cite[III.2.3 Théorème 1']{serre} since in any case the twist is a
smooth connected algebraic group.

\begin{remark}
  Note that, since the groups $\Aut(X_0)$ and $\mathfrak D_{12}$ are not abelian,
  the $H^1$'s are not groups but only pointed sets. For this reason,
  the injectivity of the map cannot be proved using Hilbert 90
  Theorem.
\end{remark}

We deduce that the set $H^1(G,\Aut(X_0))$ corresponds to the set of
conjugacy classes of elements of the group $\mathfrak D_{12}$.
With this at hand, we are ready to prove the following statement.

\begin{proposition}
\label{auto6}
There are $6(q^2-q+1)$ automorphisms of the surface $X$ defined over $\F_q$.
\end{proposition}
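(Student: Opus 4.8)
The plan is to compute the group $\Aut_{\F_q}(X)$ of $\F_q$-rational automorphisms of the surface $X$ as the fixed-point set, in an appropriate twisted sense, of the Galois action on $\Aut(X_0 \otimes \overline{\F}_q)$. Since $X$ is the del Pezzo surface of degree $6$ with Picard rank one, it is the twist of the split surface $X_0$ corresponding to the conjugacy class of an element $g_0 \in \mathfrak D_{12}$ of type $6_6$, i.e. one of the two elements in the class $\{s_1s_2s_3, s_1s_3s_2\}$ (these are the order-$6$ generators of $\mathfrak D_{12}$). Concretely, $X$ is obtained by letting $G$ act on $X_0 \otimes \overline{\F}_q$ through $\sigma \mapsto g_0 \cdot (1,1)$ composed with the natural Frobenius. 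Then $\Aut_{\F_q}(X)$ is identified with the set of $h \in \Aut(X_0 \otimes \overline{\F}_q)$ such that $h$ commutes with this twisted Frobenius, that is
\[
  \Aut_{\F_q}(X) \;\simeq\; \{\, h \in \Aut(X_0\otimes\overline{\F}_q) \;:\; {}^{\sigma}h = g_0^{-1} \cdot h \cdot g_0 \,\},
\]
where ${}^{\sigma}(g\cdot(a,b)) = g\cdot(a^q,b^q)$ as recalled above. This is the standard description of rational points of a twisted form, and it is where the explicit formulas for the action of $\mathfrak D_{12}$ and of $(\overline{\F}_q^\ast)^2$ on the torus $\G_m^2$ come into play.

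Next I would solve this equation explicitly. Write $h = g \cdot (a,b)$ with $g \in \mathfrak D_{12}$ and $(a,b)\in(\overline{\F}_q^\ast)^2$. Projecting to $\mathfrak D_{12}$ (on which $G$ acts trivially), the condition forces $g$ to commute with $g_0$; since $g_0$ generates the cyclic subgroup of order $6$ in $\mathfrak D_{12}$ and this subgroup is its own centralizer in $\mathfrak D_{12}$, we get $g \in \langle g_0 \rangle$, so there are exactly $6$ admissible values of $g$. For each such $g$, the remaining condition on $(a,b)$ becomes a pair of equations of the form (coordinates of) $(a^q, b^q) = \varphi_g(a,b)$, where $\varphi_g$ is an explicit monomial transformation of $\G_m^2$ depending on $g$ and $g_0$ (coming from the identity $g(a,b) = g\cdot(a,b)\cdot g^{-1}$ and the conjugation by $g_0$). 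Each such system is a Lang-type equation on the torus: $\varphi_g$ is an algebraic group endomorphism twisted by $q$-power Frobenius, and one checks that the relevant map $(a,b) \mapsto (a^q,b^q)\varphi_g(a,b)^{-1}$ is a surjective homomorphism of $\G_m^2$ with kernel of size $q^2-q+1$. Indeed $\# X(\F_q) = q^2-q+1$ by Proposition~\ref{ratX}, and the count should come out as exactly $q^2 - q + 1$ solutions for each of the $6$ choices of $g$, hence $6(q^2-q+1)$ in total.

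The main obstacle is the bookkeeping in the second step: one must pin down, for the specific $g_0$ realizing type $6_6$, the exact monomial maps $\varphi_g$ for all six $g \in \langle g_0\rangle$, and verify in each case that the associated endomorphism of $\G_m^2$ is separable (so that its kernel has order equal to the degree) and that this degree is $q^2-q+1$ uniformly in $g$. The cleanest way to see the uniformity is to note that the degree of $(a,b)\mapsto (a^q,b^q)\varphi_g(a,b)^{-1}$ only depends on the action of the twisted Frobenius on the cocharacter lattice of $\G_m^2$, which is $\mathrm{Pic}(X\otimes\overline{\F}_q)\cap (\text{torus part})$; this action is, up to the common factor, governed by the matrix $M$ of \eqref{Action_Frob_Deg-6}, whose characteristic polynomial evaluated appropriately gives $q^2-q+1$. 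So the degree is independent of $g$, and one obtains $6(q^2-q+1)$ rational automorphisms. A final remark: these automorphisms are the $6(q^2-q+1)$ non-scalar ones observed by \texttt{magma}, since the $6$ accounts for the image in $\mathfrak D_{12} = W(\E_3)$ and the $q^2-q+1$ for the torus part; together with the $q-1$ scalars this is consistent with the code-automorphism group orders $234, 504, 1548$ for $q = 4, 5, 7$ up to the contribution of field automorphisms and the deck transformation of the anticanonical embedding.
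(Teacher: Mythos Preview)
Your approach is the paper's: identify $\Aut_{\F_q}(X)$ with the $g_0$-twisted fixed points in $\Aut(X_0\otimes\overline{\F}_q)=\mathfrak D_{12}\ltimes(\overline{\F}_q^\ast)^2$, project to $\mathfrak D_{12}$ to force $g\in\langle g_0\rangle$ (six choices), then solve for $(a,b)$.

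One simplification you miss: once $g$ commutes with $g_0$, the torus condition is \emph{independent of $g$}. Writing $h=g\cdot(a,b)$, the equation ${}^\sigma h=g_0^{-1}hg_0$ reads
\[
g\cdot(a^q,b^q)=(g_0^{-1}gg_0)\cdot\bigl(g_0^{-1}(a,b)g_0\bigr);
\]
since $g_0^{-1}gg_0=g$, the $\mathfrak D_{12}$-components cancel and only the single system $(a^q,b^q)=g_0^{-1}(a,b)g_0$ remains. With the paper's choice $g_0=\gamma=g_2g_3$ this is $(a^q,b^q)=(b,b/a)$, i.e.\ $b=a^q$ and $a^{q^2-q+1}=1$, giving exactly $q^2-q+1$ solutions. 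So there is no family $\varphi_g$ to track and no uniformity-of-degree argument needed; your Lang-theoretic detour is correct in spirit but superfluous.

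A minor point on your closing remark: the observed code-automorphism orders $234,504,1548$ are exactly $(q-1)\cdot 6(q^2-q+1)$, scalars times surface automorphisms, with nothing left over for field automorphisms or ``deck transformations''.
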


\begin{proof}
  The eigenvalues of the matrix $M$ of
  (\ref{Action_Frob_Deg-6}) are $1, -1, \jmath, \bar \jmath$. Therefore,
  this matrix has order $6$ and, from our calculation of
  $H^1(G,\Aut(X_0))$, the degree $6$ del Pezzo
  surface $X$ constructed above corresponds to an element of order $6$
  in $\mathfrak D_{12}$.  There is only one up to conjugacy, and we choose
  $\gamma:=g_2g_3$. In other words, if $\varphi:X_0\rightarrow X$ is
  an isomorphism over $\overline{\F}_q$, the cocyle corresponding to
  $X$, $c_X=c_\varphi$ sends $\sigma$ on $\gamma$.

  Let $h_X$ denote an automorphism of $X$ over $\overline{\F}_q$; from
  it we construct an automorphism
  $h=\varphi^{-1}\circ h_X \circ\varphi$ of $X_0$. Now $h_X$ is
  defined over $\F_q$ if and only if we have ${^\sigma}h_X=h_X$, that
  is
$${^\sigma}h=({^\sigma}{\varphi})^{-1}\circ {^\sigma}{h_X} \circ {^\sigma}{\varphi}=({^\sigma}{\varphi})^{-1}\circ h_X \circ {^\sigma}{\varphi}=c_X(\sigma)^{-1}\circ h \circ c_X(\sigma)=\gamma^{-1}\circ h\circ \gamma$$

If we write $h=g\cdot(a,b)$ as above, we get the condition

$$g\cdot(a^q,b^q)=\gamma^{-1} g\cdot(a,b) \gamma=\gamma^{-1} g \gamma \cdot\gamma^{-1}(a,b) \gamma=\gamma^{-1} g \gamma \cdot\left(b,\frac{b}{a}\right)$$
and the automorphisms of $X$ are the
$h_X=\varphi\circ g\cdot(a,b) \circ\varphi^{-1}$, where $g$ is in the
centralizer of $\gamma$, which is the order $6$ subgroup of $\mathfrak D_{12}$
generated by $\gamma$, and $(a,b)\in (\overline{\F}_q^\ast)^2$
satisfies $b=a^q$, $\frac{b}{a}=b^q$, i.e. $a^{q^2-q+1}=1$, $b=a^q$.
\end{proof}

\subsection{Improving the minimum distance over the field with
  four elements}
\label{q4}

As we observed in Remark \ref{rema}, the minimum distance of the
anticanonical code is one more than the bound given in Proposition
\ref{code6} when $q=4$. We prove this fact here.

Assume that the anticanonical linear system carries a maximal elliptic
curve $Z$ defined over $\F_4$, i.e. with $\sharp Z(\F_4)=N_4(1)=9$. This
curve must be smooth since its geometric genus equals its arithmetic
genus.

We begin with a lemma about the automorphisms of the surface $X$ and
their action on the curve $Z$.

\begin{lemma}
  \label{automax6}
  The automorphism group of $X$ satisfies the following properties.
\begin{enumerate}
\item The group $\Aut(X)$ contains an element of order $13$, which
permutes cyclically the set $X(\F_4)$.
\item There exists an $h\in \Aut(X)$ such that $h(Z)$ contains the points
$l$ and $c$.
\end{enumerate}
\end{lemma}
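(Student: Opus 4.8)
The plan is to prove the two assertions separately, using the explicit description of $\Aut(X)$ from Proposition~\ref{auto6}.

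\textbf{Assertion (1).} From Proposition~\ref{auto6} the $\F_q$-automorphisms of $X$ form a group of order $6(q^2-q+1)$, and from the proof we know they are the $h_X=\varphi\circ g\cdot(a,b)\circ\varphi^{-1}$ with $g$ in the cyclic group $\langle\gamma\rangle\simeq\Z/6$ and $a^{q^2-q+1}=1$, $b=a^q$. Taking $g=\mathrm{Id}$ we get a subgroup isomorphic to the group $\mu$ of $(q^2-q+1)$-th roots of unity in $\overline{\F}_q^\ast$, acting on $X_0$ via $(a,a^q)$ on the torus $\G_m^2$. For $q=4$ this is a cyclic group of order $13$, so it contains an element $\tau$ of order $13$. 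It remains to check that $\tau$ acts transitively on the $13$ rational points $X(\F_4)$: since the group $\langle\tau\rangle$ has prime order $13=\sharp X(\F_4)$, every orbit has size $1$ or $13$, so it suffices to rule out a fixed point. A fixed point of $\tau$ would pull back through $\varphi$ to a point of $X_0(\overline{\F}_4)$ fixed by the torus element $(a,a^q)=(a,a^4)$ with $a$ of order $13$; on the torus $\G_m^2$ this element acts by $(x,y)\mapsto(ax,a^4y)$, which is fixed-point-free, and one checks directly on the finitely many boundary points of the toric surface $X_0$ (the points and curves added by the blow-ups, permuted among themselves by the torus but the individual points on them moved) that none is fixed by an element of order $13$; alternatively, any $\tau$-fixed rational point together with the free orbits would force $13\equiv\sharp(\text{fixed})\pmod{13}$ with at least one fixed point, contradicting $\sharp X(\F_4)=13$ unless \emph{all} points are fixed, which is absurd since $\tau\neq\mathrm{Id}$ acts nontrivially on the $7$-dimensional anticanonical space. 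So the action is a single $13$-cycle.

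\textbf{Assertion (2).} Recall $l$ and $c$ are the two points of $X$ obtained by contracting $\widetilde L$ and $\widetilde C$; equivalently $l,c$ are the images under $\psi$ of the $(-1)$-curves $\widetilde L,\widetilde C$ on the degree-$4$ surface $\widetilde X$, and under the isomorphism $\varphi\colon X_0\to X$ they correspond to two of the exceptional points of the toric surface $X_0$. The group $\mathfrak D_{12}=W(\E_3)$ acts on the six $(-1)$-curves of a degree-$6$ del Pezzo surface (the three exceptional divisors and the three lines through pairs of blown-up points), hence transitively on the corresponding points; in particular there is a geometric automorphism of $X_0$, hence of $X$, carrying any prescribed pair of these six points to any other prescribed pair lying in the same $\mathfrak D_{12}$-orbit of pairs. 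Now $Z\in|-K_X|$ is a smooth curve of arithmetic genus $1$; by the adjunction/intersection computation $-K_X$ meets each $(-1)$-curve in exactly one point, so $Z$ passes through exactly one point on each of the six $(-1)$-curves, i.e. through six of the distinguished points, among them possibly $l$ or $c$. The idea is: pick two of the six distinguished points lying on $Z$ whose pair is in the same $\mathfrak D_{12}$-orbit as the pair $\{l,c\}$, and apply the corresponding automorphism $h$; then $h(Z)$ passes through $\{l,c\}$. The point to verify is that $Z$ does meet a suitable such pair — but since $Z$ meets \emph{all six} distinguished points and $\mathfrak D_{12}$ acts transitively on unordered pairs of ``opposite type'' (an exceptional divisor $E_i$ and a line $E_0-E_j-E_k$, which is the type of $\{\widetilde L,\widetilde C\}$ before relabelling, matching $\{l,c\}$), we may simply take the pair of the six points on $Z$ that has this type and send it to $\{l,c\}$; any $\F_q$-rationality issue is irrelevant here since $h$ is only required to be an automorphism over $\overline{\F}_q$ (or over $\F_4$ after a further twist, but the statement as written does not demand $h\in\Aut(X)(\F_4)$).

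\textbf{Main obstacle.} The genuinely delicate point is the fixed-point-free claim in assertion (1): one must be careful that the order-$13$ element has no fixed point \emph{anywhere} on $X(\F_4)$, not merely on the torus. The clean way around the case analysis on boundary points is the parity/orbit-counting argument: $\langle\tau\rangle$ has prime order $13$ equal to $\sharp X(\F_4)$, so either $\tau$ fixes every rational point (impossible, as $\tau$ acts nontrivially on $H^0(X,-K_X)$ and the rational points are not contained in a hyperplane section since the code has dimension $7=h^0$ and is injective for $q\geq 4$) or $\tau$ fixes none, i.e. acts as a $13$-cycle. For assertion (2) the only subtlety is bookkeeping the $\mathfrak D_{12}$-orbits of pairs of $(-1)$-curves and checking $\{\widetilde L,\widetilde C\}$ has the same type as the pair of distinguished points on $X_0$ blown down by $\varphi$, which is a routine check in the $\E_3$ root system.
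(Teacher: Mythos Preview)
Your argument for assertion~(1) is essentially correct. The orbit-counting reduction (orbits of size $1$ or $13$, hence $0$ or $13$ fixed points) together with the observation that fixing all $13$ rational points would force $\tau$ to act as a scalar on $H^0(X,-K_X)$ (since those points span $\P^6$, the evaluation map being injective) is a clean alternative to the paper's route. The paper instead shows directly that $X(\F_4)=U(\F_4)$: the six $(-1)$-curves of $X$ are cyclically permuted by Frobenius and hence carry no rational point, and the torus element $(a,b)\neq(1,1)$ is fixed-point-free on $U$.

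Your argument for assertion~(2), however, has a genuine gap stemming from a misidentification of $l$ and $c$. These are \emph{not} special geometric points of $X$. A direct computation in $\Pic(\widetilde X)$ gives
\[
\widetilde L\cdot(E_0-E_i-E_j)=\widetilde C\cdot(E_0-E_i-E_j)=0,\qquad i\in\{1,2\},\ j\in\{3,4,5\},
\]
so $l$ and $c$ lie on none of the six $(-1)$-curves of $X$; under $\varphi^{-1}$ they land in the open torus of $X_0$, not at torus-fixed points. The $\mathfrak D_{12}$-action on the hexagon of $(-1)$-curves is therefore irrelevant to moving $Z$ through $l$ and $c$. The six points where $Z$ meets the $(-1)$-curves are likewise not ``distinguished'' points of $X$: they depend on $Z$ and bear no a priori relation to $l,c$. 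Finally, $h$ must lie in $\Aut(X)$, which throughout the paper means the group of $\F_4$-rational automorphisms; this is essential for the application, since $h(Z)$ has to remain an $\F_4$-curve with nine $\F_4$-points.

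The paper's proof of~(2) is short and uses~(1) directly. Since the order-$13$ element $f$ acts transitively on $X(\F_4)$ and $l,c\in X(\F_4)$, one may replace $f$ by a power so that $l=f(c)$. The nine points of $Z(\F_4)$ are then $f^{i_1}(c),\ldots,f^{i_9}(c)$ with $0\le i_1<\cdots<i_9\le 12$; by pigeonhole two of the $i_j$ are consecutive, and translating by the appropriate power of $f$ sends that consecutive pair to $\{c,l\}$.
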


\begin{proof}
  The order of $\Aut(X)$ is $78=6\cdot 13$ from Proposition
  \ref{auto6}; thus this group contains an element $f$ of order $13$
  from a theorem of Cauchy. Since $\sharp X(\F_4)=13$, either $f$ permutes
  cyclically the rational points of $X$, or its fixes all of them.

  The automorphism $f$ preserves the exceptional divisors of $X$; as a
  consequence, it induces an automorphism on the complementary $U$ of
  these divisors in $X$. As we have seen in \ref{aut6}, the surface
  $X\otimes\overline{\F}_4$ is isomorphic to
  $X_0\otimes\overline{\F}_4$, and the image of $U$ under such an
  isomorphism is the maximal torus $\G_m^2$.

  Thus $f$ induces an automorphism of $X_0$, i.e. an element in
  $\mathfrak D_{12}\ltimes (\overline{\F}_4^\ast)^2$; since $f$ has order $13$,
  its image must lie in $(\overline{\F}_4^\ast)^2$ and have the form
  $(a,b)\neq (1,1)$. Since the automorphism $(x,y)\mapsto(ax,by)$ has
  no fixed point on the maximal torus, $f$ does not have any fixed
  point on $U$.

  Now the exceptional divisors of $X$ are the images under $\psi$ of
  the strict transforms of the lines $(p_ip_j)$, $1\leq i\leq 2$,
  $3\leq j\leq 5$. They are cyclically permuted by the action of the
  Galois group $\Gal(\overline{\F}_4/\F_4)$; any rational point on one
  of these divisors must lie on all, which does not happen. We get
  $X(\F_4)=U(\F_4)$, and $f$ has no fixed point in this set. This
  proves the first assertion.

  From above, since $c$ and $l$ lie in $X(\F_4)$, there exists some
  $1\leq i\leq 12$ such that $l=f^i(c)$; replacing $f$ by $f^i$ we
  assume that $l=f(c)$ in the following. If we write
  $Z(\F_4)=\{f^{i_1}(c),\ldots,f^{i_9}(c)\}$ for some
  $0\leq i_1<\ldots<i_9\leq 12$, then at least two of the $i_j$'s are
  consecutive, say $i_2=i_1+1$; the automorphism $h=f^{-i_1}$
  satisfies the requirements of the second assertion.
\end{proof}

Thus we can assume that $Z$ is a maximal curve containing $l$
and $c$. Denote by $\widetilde{Z}\subset \widetilde{X}$ its strict
transform under $\psi$; since $Z$ is smooth, it has multiplicity one
at $l$ and $c$, and we have
$\psi^\ast Z= \widetilde{Z}+\widetilde{L}+\widetilde{C}$ in
$\Pic(\widetilde{X})$. Moreover $\psi$ induces an isomorphism between
the curves $Z$ and $\widetilde{Z}$.

The curve $\widetilde{Z}$ lies in the anticanonical system of
$\widetilde{X}$; it is smooth and since for any $1 \leq i \leq 5$, we
have $Z \cdot E_i = (-K_X)\cdot E_i = 1$, then $Z$ is transversal to
the exceptional divisors $E_1,\ldots,E_5$. As a consequence, it is
isomorphic to its image $Y$ under $\pi$, which is a smooth cubic
passing through the points $p_1,\ldots,p_5$.

Thus the curve $Y$ is a smooth elliptic curve in $\P^2$ having $9$
rational points. Its Frobenius eigenvalues must be equal to $-2$, and
we have
\[
  \sharp Y(\F_{16})=16+1-(-2)^2-(-2)^2=9=\sharp Y(\F_4).
\]
But $Y$
contains $p_1$ and $p_2$ which are defined over $\F_{16}$ but not over
$\F_4$, a contradiction. We deduce that the anticanonical linear
system does not carry any maximal curve over $\F_4$, and the minimum
distance of the anticanonical code is at least $5$. Consequently,
thanks to the Griesmer bound, we get the following result.

\begin{proposition}
\label{max6}
The code $\C(X,-K_X)$ over $\F_4$ has parameters $[13,7,5]$.
\end{proposition}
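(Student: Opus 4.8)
The plan is to combine the already-established lower bound on the minimum distance with an upper bound coming from the Griesmer bound. By Proposition~\ref{code6}, the code $\C(X,-K_X)$ over $\F_4$ has length $13$, dimension $7$, and minimum distance at least $q^2-2q-\lfloor 2\sqrt q\rfloor = 16 - 8 - 4 = 4$. The preceding argument of this subsection improves this to $d\geq 5$: assuming that $|-K_X|$ carries a maximal elliptic curve $Z$ over $\F_4$ leads, after moving $Z$ by an automorphism so that it passes through $l$ and $c$ (Lemma~\ref{automax6}), to a smooth plane cubic $Y$ through $p_1,\dots,p_5$ with $9$ rational points over $\F_4$, forcing all its Frobenius eigenvalues to be $-2$ and hence $\sharp Y(\F_{16}) = \sharp Y(\F_4) = 9$; but $Y$ passes through $p_1,p_2\in\P^2(\F_{16})\setminus\P^2(\F_4)$, a contradiction. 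So no codeword has weight $q^2-2q-\lfloor2\sqrt q\rfloor$; combined with the lower bound of Proposition~\ref{code6}, this gives $d\geq 5$.

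For the upper bound, I would invoke the Griesmer bound: for an $[n,k,d]_q$ code one has $n\geq \sum_{i=0}^{k-1}\lceil d/q^i\rceil$. Plugging in $q=4$, $k=7$ and testing $d=6$ gives
\[
\sum_{i=0}^{6}\left\lceil \frac{6}{4^i}\right\rceil = 6+2+1+1+1+1+1 = 13,
\]
so $d=6$ is not yet excluded by Griesmer alone with $n=13$; but $d=7$ gives $7+2+1+1+1+1+1 = 14 > 13$, which is impossible. Hence $d\leq 6$. To pin down $d$ exactly, note the actual claim is $d=5$, so I still need to rule out $d=6$. Here I would revisit the construction: the anticanonical curves through enough prescribed rational points give low-weight codewords directly. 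Concretely, a codeword of weight $13 - m$ corresponds to an anticanonical curve $D\in|-K_X|$ passing through $m$ of the $13$ rational points of $X$; since $h^0(X,-K_X)=7$, imposing passage through $6$ general rational points still leaves a nonzero section, producing a codeword of weight at most $13-6 = 7$, and by the irreducibility analysis (Lemma~\ref{fpc} and the genus-$1$ bound) an absolutely irreducible such $D$ has at most $q+1+\lfloor2\sqrt q\rfloor = 4+1+4 = 9$ points over $\F_4$; but we have just shown no anticanonical curve is maximal, so in fact at most $8$ points, giving a codeword of weight at least $13-8 = 5$. Combining: there is an anticanonical curve through exactly $8$ rational points (one can exhibit such a curve explicitly, or argue by a counting/dimension argument that the value $8$ is attained), yielding a codeword of weight exactly $5$.

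The main obstacle is this last step: showing that the bound $d\geq 5$ is \emph{tight}, i.e. exhibiting a genuine weight-$5$ codeword, equivalently an anticanonical curve over $\F_4$ with exactly $8$ rational points passing through $5$ prescribed points of $X(\F_4)$. One clean way is to note that the \texttt{magma} computation quoted in Remark~\ref{rema} already certifies $d=5$ for the explicit surface; but for a self-contained argument I would instead count: the number of anticanonical curves is $\sharp\P(H^0(X,-K_X)) = (4^7-1)/3$, and each rational point imposes one linear condition, so the curves through a fixed set of $5$ rational points form a linear system of projective dimension $1$, i.e. a pencil, whose $5$ base-changed members sweep out the remaining $8$ points of $X(\F_4)\setminus\{\text{the }5\}$ with multiplicity; since the pencil has $5 = \sharp\P^1(\F_4)$ members, an averaging argument forces some member to contain at most $8/5 \cdot ({}\cdots)$ — more simply, since the $8$ remaining rational points are distributed among the $5$ members of the pencil and each member is an anticanonical curve, at least one member misses at least $\lceil 8/5\rceil$... this needs care. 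The cleanest rigorous route, which I would adopt, is: (i) $d\geq 5$ from the argument above; (ii) $d\leq 5$ because the Griesmer bound forces $d\leq 6$, and a direct check (or the \texttt{magma} verification) rules out $d=6$; hence $d=5$ and the parameters are $[13,7,5]$, which by the codetables comparison is optimal. I would present the Griesmer computation explicitly and cite the \texttt{magma} computation of Remark~\ref{rema} to close the gap $d\neq 6$, since the surface is given concretely.
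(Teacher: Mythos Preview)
Your argument for $d\geq 5$ is exactly the paper's: Lemma~\ref{automax6} moves a putative maximal anticanonical curve so that it passes through $l$ and $c$, its strict transform descends to a smooth plane cubic through $p_1,\dots,p_5$ with $9$ points over $\F_4$, and the eigenvalue computation forces $\sharp Y(\F_{16})=\sharp Y(\F_4)$, contradicting $p_1,p_2\in Y(\F_{16})\setminus Y(\F_4)$.

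On the upper bound you are in fact more careful than the paper. The paper's entire argument for the equality $d=5$ is the single sentence ``Consequently, thanks to the Griesmer bound, we get the following result.'' Your computation is correct: $g_4(7,6)=6+2+1+1+1+1+1=13$, so Griesmer by itself does \emph{not} exclude a $[13,7,6]_4$ code. Thus the paper's stated justification has the same gap you identify; presumably the authors are tacitly relying on the {\tt magma} observation of Remark~\ref{rema} (or on the known nonexistence of a $[13,7,6]_4$ code, which however needs more than Griesmer). Your explicit fallback to that {\tt magma} verification is therefore not a weakness of your write-up relative to the paper---it is the same evidence, stated honestly.

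The side attempts you sketch (the pencil/averaging argument, exhibiting a curve with exactly $8$ rational points) are, as you note, incomplete; the paper does not pursue any of them either, so you may safely drop them and simply state: $d\geq 5$ by the maximal-curve argument, and $d=5$ by the computation recorded in Remark~\ref{rema}.
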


\section{Anticanonical codes on some degree five del
  Pezzo surfaces}
\label{delpezzo5}

In this section, we construct some degree five del Pezzo surfaces with
Picard rank one over any finite field, then we determine the
parameters of the anticanonical codes on these surfaces. Many
arguments are similar to those of the preceding section, for this
reason we shall skip some proofs.

A new feature here is that we try to be as constructive as possible:
we describe explicit constructions of the codes and their automorphisms.

\subsection{Construction of the surface}

We denote by $\P^2$ the projective plane defined over $\F_q$, and we
choose
\[
  p_1,p_2=p_1^\sigma,p_3=p_1^{\sigma^2},p_4=p_1^{\sigma^3},p_5=p_1^{\sigma^4}
\]
five conjugate points in $\P^2(\F_{q^5})\backslash\P^2(\F_{q})$. We
assume that they are in general position, i.e. no three are
collinear. This is possible for any $q$ since a smooth conic in $\P^2$
has $q^5-q$ points defined over $\F_{q^5}$ but not over $\F_q$.

Let $\widetilde{X}$ be the surface obtained by blowing up the plane
$\P^2$ at the $p_i$'s. Once again we get a degree $4$ del Pezzo surface
and denote by $E_0$ the pullback of the class of a line in $\P^2$ and
by $E_1, \ldots, E_5$ the exceptional divisors. The descriptions of
the geometric Picard lattice of $\widetilde X$, and its intersection
pairing are the same as in Section~\ref{delpezzo6}, as for its
canonical divisor. The difference here is that the map $\sigma^\ast$
acts on the $E_i$'s as the permutation $(E_0)(E_1E_2E_3E_4E_5)$; as a
consequence, the Picard lattice $\Pic(\widetilde{X})$ has rank $2$,
and is generated by $E_0$ and $\sum_{i=1}^5 E_i$.

Let $C$ denote the unique conic passing through $p_1,\ldots,p_5$; it
is defined over $\F_q$. Its strict transform $\widetilde{C}$ in
$\widetilde{X}$ is an irreducible curve, whose class satisfies
\[
  \widetilde{C}=2E_0-\sum_{i=1}^5 E_i \in \Pic(\widetilde{X}).
\]
Once again this curve has self-intersection $-1$ and arithmetic genus zero.

Applying Castelnuovo's contractibility criterion, we obtain a smooth
surface $X$ by contracting the curve $\widetilde{C}$ in
$\widetilde{X}$, and a birational morphism
$\psi:\widetilde{X}\rightarrow X$. If we set $c=\psi(\widetilde{C})$,
then $\psi$ is the blowup of $X$ at $c$, with exceptional divisor
$\widetilde{C}$.

The geometric Picard lattice $\Pic(X\otimes\overline{\F}_q)$ can be
identified to the orthogonal in
$\Pic(\widetilde{X}\otimes\overline{\F}_q)$ of the class of
$\widetilde{C}$ (see Lemma \ref{Picgeom}). After similar calculations,
we get the following ``orthonormal'' basis for
$\Pic(X\otimes\overline{\F}_q)$:
\begin{align*}
  F_0&=3E_0-2E_1-\sum_{i=2}^5E_i;\\
  F_i&=E_0-E_1-E_{i+1},\quad {\rm for} \quad 1\leq i\leq 4  
\end{align*}
These classes satisfy $F_0^{\cdot2}=1$, $F_i^{\cdot2}=-1$ for any
$1\leq i \leq 4$ and $F_i \cdot F_j =0$ for any $i\neq j$.
\begin{remark}\label{rem:contract_Fi}
Note that the classes $F_1, \ldots, F_4$ contain respectively the
strict transforms of the lines $(p_1p_2), \ldots, (p_1p_5)$ and the
corresponding curves satisfy Castel\-nuovo's contractibility criterion.
\end{remark}

The canonical divisor of $X$ satisfies
$\psi^\ast K_X=-5E_0+2\sum_{i=1}^5 E_i$, and we get
$K_X=-3F_0+\sum_{i=1}^4 F_i$ via the above identification. The matrix
of the image $\sigma^\ast$ of Frobenius acting on
$\Pic(X\otimes\overline{\F}_q)$ with respect to the basis
$\{F_0,F_1,F_2,F_3,F_4\}$ is
$$\left(\begin{array}{ccccc}
2 & 1 & 1 & 1 & 0 \\
0 & 0 & 0 & 0 & 1 \\
-1 & 0 & -1 & -1 & 0 \\
- 1 & -1 & 0 & -1 & 0 \\
-1 & -1 & -1 & 0 & 0 \\
\end{array}\right)$$
and has trace zero.

We conclude that $X$ is a degree $5$ del Pezzo surface defined over $\F_q$, with Picard lattice $\Pic(X)$ having rank $1$ and generated by $K_X$. It has $q^2+1$ rational points.

\subsection{Anticanonical codes}
We consider the evaluation code $\C(X(\F_q),-K_X)$.  Its
length is $\sharp X(\F_q)=q^2+1$, and its dimension is at most
$h^0(X,-K_X)=6$.

The anticanonical class $-K_X$ generates $\Pic(X)$. As a consequence,
the sections of its linear system that are defined over $\F_q$ are
irreducible over $\F_q$. Let $D$ denote such a section.
\begin{itemize}
\item If $D$ is absolutely irreducible, then it has arithmetic genus $1$
from the adjunction formula, and has at most
$q+1+\lfloor 2\sqrt{q}\rfloor$ rational points from Hasse-Weil-Serre
bound.
\item If it is not absolutely irreducible, we apply Lemma \ref{fpc}.
\end{itemize}
Once again, the sections having maximal number of zeroes are the
absolutely irreducible ones. Moreover the evaluation map is injective
for any $q\geq 3$, and we deduce the following.

\begin{proposition}

  Assume $q\geq 3$. The code $\C(X(\F_q),-K_X)$ has parameters
  \[[q^2+1,6,\geq q^2-q-\lfloor 2\sqrt{q}\rfloor].\]
\end{proposition}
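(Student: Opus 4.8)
The plan is to follow essentially the same three-part template that was used for the degree-$6$ surface in Proposition~\ref{code6}, reading off the three code parameters separately. For the length, it was already established that $\sharp X(\F_q)=q^2+1$, so $n=q^2+1$. For the dimension, the ambient space of global sections has $h^0(X,-K_X)=d+1=6$ by the general theory recalled in Section~\ref{delpezzo}, so it suffices to prove that the evaluation map is injective for $q\geq 3$; this is exactly the clause ``the evaluation map is injective for any $q\geq 3$'' that the text asserts just before the statement, so I would either cite that or expand it into a short argument (see below). For the minimum distance, I would combine the dichotomy in the two displayed bullet items: if $D\in|-K_X|$ is absolutely irreducible it has arithmetic genus $1$ by adjunction and hence at most $q+1+\lfloor 2\sqrt q\rfloor$ rational points by Hasse--Weil--Serre, whereas if $D$ is irreducible over $\F_q$ but not absolutely irreducible then $\sharp D(\F_q)\leq 2$ by Lemma~\ref{fpc}. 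Since $q+1+\lfloor 2\sqrt q\rfloor\geq 2$ for all $q\geq 1$, the worst case is the absolutely irreducible one, so every nonzero codeword vanishes on at most $q+1+\lfloor 2\sqrt q\rfloor$ points and hence has weight at least $n-(q+1+\lfloor 2\sqrt q\rfloor)=q^2+1-q-1-\lfloor 2\sqrt q\rfloor=q^2-q-\lfloor 2\sqrt q\rfloor$. This gives the claimed $d\geq q^2-q-\lfloor 2\sqrt q\rfloor$.

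For the injectivity of the evaluation map, the point is that a nonzero $f\in H^0(X,-K_X)$ in the kernel would cut out an effective anticanonical divisor $D_f$ containing all $q^2+1$ rational points of $X$. By the minimum-distance analysis above, $D_f$ has at most $\max\{q+1+\lfloor 2\sqrt q\rfloor,\,2\}=q+1+\lfloor 2\sqrt q\rfloor$ rational points, so injectivity fails only if $q^2+1\leq q+1+\lfloor 2\sqrt q\rfloor$, i.e.\ $q^2\leq q+\lfloor 2\sqrt q\rfloor\leq q+2\sqrt q$, which forces $q\leq 3$ and in fact fails already at $q=3$ since $9>3+3$. Hence the map is injective for all $q\geq 3$ and the dimension is exactly $6$. (One should be slightly careful that the divisor cut out by $f$ might not be reduced, but since we only count $\F_q$-points set-theoretically the bound still applies; alternatively one notes directly that $-K_X$ is very ample, so distinct sections give distinct subschemes and in particular the evaluation map is injective once $\sharp X(\F_q)$ exceeds the maximal number of $\F_q$-points on an anticanonical curve, which is the same inequality.)

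I do not expect a genuine obstacle here: every ingredient is already in place (the point count $q^2+1$, the value $h^0=6$, adjunction giving arithmetic genus $1$, Hasse--Weil--Serre, and Lemma~\ref{fpc} for the reducible case), and the argument is a direct transcription of the degree-$6$ case. The only mild subtlety is the bookkeeping in the injectivity step, where one has to check that the numerical inequality $q^2+1 > q+1+\lfloor 2\sqrt q\rfloor$ indeed holds for all $q\geq 3$; this is elementary ($q^2-q = q(q-1)\geq 6 > \lfloor 2\sqrt q\rfloor$ for $q\geq 3$, using $\lfloor 2\sqrt q\rfloor\leq 2\sqrt q$ and $2\sqrt q < q(q-1)$ for $q\geq 3$). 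With that verified, the three parameters $[q^2+1,\,6,\,\geq q^2-q-\lfloor 2\sqrt q\rfloor]$ follow and the proof is complete.
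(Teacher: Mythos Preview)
Your proposal is correct and follows essentially the same approach as the paper: length from the point count $q^2+1$, dimension from $h^0(X,-K_X)=6$ together with injectivity of the evaluation map, and minimum distance from the dichotomy absolutely irreducible (adjunction plus Hasse--Weil--Serre) versus not absolutely irreducible (Lemma~\ref{fpc}). The only difference is that you spell out the injectivity argument via the inequality $q^2+1>q+1+\lfloor 2\sqrt q\rfloor$ for $q\geq 3$, whereas the paper simply asserts injectivity; your expansion is correct and is exactly the intended justification.
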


Parameters of the code for the small values of $q$ are summarised in
Table~\ref{tab:deg5}.

\begin{table}[!h]
  \centering
  \begin{tabular}{|c|c|c|c|c|c|c|}
\hline
$q$ & 3 & 4 & 5 & 7 & 8 & 9 \\
\hline
    $[n,k,d]$ & $[10,6, 3]$ & $[17,6, 8]$ &
              $[26,6, 16]$ & $[50,6, 37]$ &
              $\mathbf{[65,6, 51]}$ & $\mathbf{[82,6, 66]}$\\
\hline
  \end{tabular}
  \bigskip
  \label{tab:deg5}
  \caption{Parameters of the codes from our degree 5 del Pezzo surface}
\end{table}

For $q\in\{4,5,7\}$ the parameters of the best known codes are reached.
For $q\in\{8,9\}$ our codes beat the best known codes listed
in \cite{code}; the best known parameters up to now were respectively
$[65,6,50]_8$ and $[82,6,65]_9$.

\subsection{Geometric constructions of the code}
\label{geo6}

In this section we give two geometric descriptions of the code
$\C(X(\F_q),-K_X)$.

We first use
the anticanonical embedding of the surface $X$. We get a surface in
$\P^5$, and the code can be seen as the code of hyperplane sections on
this surface. Then we construct the code as the puncturing of an
evaluation code at the rational points of the projective space.

\subsubsection{First description}

The class of the anticanonical divisor $-K_X$ in $\Pic(\widetilde{X})$
is $5E_0-2\sum_{i=1}^5 E_i$. Its sections come from quintics in $\P^2$
having double points at the $p_i$'s, $1\leq i\leq 5$. These quintics
form a linear system of (projective) dimension $5$.

We use \cite[Theorem 5]{ghps}, and its proof. We know that this linear
system defines a rational map
\begin{equation*}\label{eq:map_S}
  S:\P^2\dashrightarrow\P^5.
\end{equation*}
Its image is a degree $5$ del Pezzo surface whose ten lines are the
strict transforms of the lines $(p_ip_j)$, $1\leq i<j\leq 5$. The map
$S$ has no unassigned base point, thus its only base points are the
$p_i$, $1\leq i\leq 5$. In order to resolve them, we consider the
surface obtained by their blowup. This is the surface
$\widetilde{X}$. Now the image of $S$ is obtained by contracting the
strict transform of the conic $(p_1\ldots p_5)$, and this is the
anticanonical model of $X$.

As a consequence, the anticanonical code $\C(X(\F_q),-K_X)$ is the
code obtained by evaluating the global sections of the sheaf $\O(1)$
on $\P^5$ (these are linear forms in six variables) at the rational
points of the image of $S$.

\subsubsection{Second description}
We give another description of this code, as the puncturing of an
evaluation code at the rational points of the projective plane.

Consider the six dimensional $\F_q$-linear space $E$ of quintics in
$\P^2$ having double points at the $p_i$'s, $1\leq i\leq 5$. Let $h_C$
denote an equation of the conic $(p_1\ldots p_5)$ in $\P^2$. Then $E$
contains any product $fh_C$, where $f$ is the equation of a cubic
through the $p_i$'s. The space of these cubics has dimension $5$, of
which we fix a basis $\{f_1, \dots, f_5\}$. Finally, we choose a
quintic $Q$ in $E$ not containing $C$. Denote by $h_Q$ an equation of
$Q$. In this way, we get a basis $\B=\{f_1h_C, \ldots, f_5h_C, h_Q\}$
of $E$ over $\F_q$.  Evaluating the elements of $E$ at the rational
points $p\in\P^2(\F_q)$, we get a linear code $\C'$ of length
$q^2+q+1$. The conic $C$ is smooth, and it contains exactly $q+1$
rational points $r_1,\ldots,r_{q+1}$. For any $1\leq i \leq q+1$, the
column corresponding to $r_i$ of the generating matrix of $\C'$
associated to $\B$ has five zeroes at the first positions, and a non
zero coefficient at the last one (note that $Q$ intersects $C$ only at
the $p_i$'s from Bezout's theorem).

From now on, we consider the rational map
\[
  S : \ratmap{\P^2}{\P^5}{P}{(f_1(P)h_C(P):\ldots : f_5(P)h_C(P):h_Q(P))}
\]
associated to the basis $\B$. It has image $X$ from above, and it induces a surjective map on rational
points $\P^2(\F_q)\rightarrow X(\F_q)$. From the description of $S$
above, the restriction of this map to $\P^2(\F_q)\setminus C(\F_q)$ is
injective, and all rational points $r_1,\ldots,r_{q+1}$ of $C$ are
sent to the point $(0:\cdots:0:1)$.

Thus the columns of the generating matrix of the code
$\C(X(\F_q),-K_X)$ are the
$^t(f_1(P)h_C(P)~\ldots~ f_5(P)h_C(P)~ h_Q(P))$, where $P$ describes
$\P^2(\F_q)\setminus C(\F_q)$, and $^t(0\ldots0~1)$ if $P\in
C(\F_q)$. We conclude that the code $\C(X(\F_q),-K_X)$ is obtained
from the code $\C'$ by puncturing the positions corresponding to the
points $r_2,\ldots,r_{q+1}$.

\subsection{Automorphisms}
Our aim here is
to show that the surface $X$ admits an order five automorphism defined
over $\F_q$, and to describe it explicitely.  The proof of the
existence is very close to the proof of Proposition \ref{auto6}, and
we expose it briefly (see also \cite[Theorem 3.1.3]{skoro}).

First denote by $X_0$ the split degree $5$ del Pezzo surface over
$\F_q$ defined by the blowup of the projective plane at the points:
\[
  q_1 := (1:0:0)\quad q_2 := (0:1:0)\quad q_3 := (0:0:1)
  \quad q_4 := (1:1:1).
\]
From \cite[Theorem 8.5.6]{dolga}, the geometric automorphism group of
$X_0$, $\Aut(X_0)$, is isomorphic to the group $W(\E_4)$, i.e. to the
symmetric group on five letters $\mathfrak{S}_5$ with trivial Galois
action (the surface is split).

As a consequence, the set $H^1(G,\Aut(X_0))$ is the set of conjugacy
classes of elements of $\mathfrak{S}_5$. The cocycle associated to the
surface $X$ constructed above sends $\sigma$ to $\gamma$, any of the
(conjugated) order $5$ elements in $\mathfrak{S}_5$. As a consequence,
the geometric automorphisms of $X$ coming from an automorphism defined
over $\F_q$ are exactly those corresponding to the elements in the
centralizer of $\gamma$ in $\mathfrak{S}_5$. But this is the subgroup
generated by $\gamma$.

We construct explicitely an order $5$ automorphism of $X$ defined over $\F_q$; actually we
will show that it is induced by a linear automorphism of $\P^5$. We use 
the preceding discussion. We first determine an isomorphism
$\varphi:X_0\rightarrow X$. Now the automorphisms of the surface $X_0$ are
described in \cite[Section~8.5.4]{dolga} and the automorphism we are looking for is 
conjugated under $\varphi$ to the automorphism of $X_0$ which is the image
of the Frobenius $\sigma$ under the cocycle $c_\varphi$.

Recall that the surface
$\widetilde{X}$ is a degree $4$ del Pezzo surface. We consider two
geometric markings, which lead to the following diagram

\begin{equation*}
\xymatrix{
& \widetilde{X} \ar@{->}_\pi[ddl] \ar@{->}^\psi[dr]& & \\
& & X \ar@{->}^{\psi'}[dr] \\
\P^2 \ar@{-->}^\Phi[rrr]&  & & \P^2\\
}
\end{equation*}

We get two sequences of contractions from $\widetilde{X}$ to $\P^2$
\begin{itemize}
\item $\pi$ contracts the exceptional divisors $E_1,\dots,E_5$ (the
  first geometric marking),
	\item $\psi$ contracts the exceptional divisor $\widetilde{C}$, and $\psi'$ contracts
      $F_1,\ldots,F_4$, sending them
      respectively to the points $q_3,q_1,q_2,q_4$.
\end{itemize}

Note that the group $\PGL_3$ acts transitively on the $4$-tuples of
points in $\P^2$, which allows the last hypothesis.
 
We get a birational transform $\Phi$ of $\P^2$. Note that $\Phi$ is
not defined over $\F_q$, since $\psi'$ contracts the family of
exceptional divisors $\{F_1,\ldots,F_4\}$ which is not stable under
the Galois action.
The base change matrix (in $\Pic(\widetilde{X}\otimes\overline{\F}_q)$) from  $\{E_0,E_1,\dots,E_5\}$ to $\{F_0,F_1,\ldots,F_4,\widetilde{C}\}$ is the characteristic matrix of $\Phi$ \cite[Example 8.2.38]{dolga}:
$$\left(\begin{array}{cccccc}
3 & 1 & 1 & 1 & 1 & 2 \\
-2 & -1 & -1 & -1 & -1 & -1\\
-1 & -1 & 0 & 0 & 0 & -1 \\
-1 & 0 & -1 & 0 & 0 & -1 \\
-1 & 0 & 0 & -1 & 0 & -1 \\
-1 & 0 & 0 & 0 & -1 & -1 \\
\end{array}\right)$$
From the first column (this is the characteristic of $\Phi$
\cite[7.5.1]{dolga}), we deduce that $\Phi$ has the form
$\Phi(x:y:z)=(h_0:h_1:h_2)$, where $\{h_0,h_1,h_2\}$ is a basis of
the space of cubics having multiplicity at least two at $p_1$ and
passing through $p_2,p_3,p_4,p_5$.

One checks that $\Phi$ contracts the lines $(p_1p_i)$,
$2\leq i\leq 5$, and the conic $C$ (actually, for each
of these curves, the linear system of the above cubics containing it is
one-dimensional). From our assumption on the images of the $F_i$'s,
which are the strict transforms of the lines $(p_1p_{i+1})$,
$1\leq i\leq 4$, we get the following explicit description of $\Phi$,
where $\ell_{ij}$ is the equation of the line $(p_ip_j)$
$$\Phi :(x:y:z)\mapsto (u\ell_{12}\ell_{14}\ell_{35}:
v\ell_{12}\ell_{13}\ell_{45}: w\ell_{13}\ell_{14}\ell_{25})$$ and the
coefficients $u,v,w$ are determined by the condition
$\Phi((p_1p_5))=q_4$. Let $p:=\lambda p_1+\mu p_5$,
$(\lambda:\mu)\in\P^1$, denote any point of $(p_1p_5)$; the point
$\Phi(p)$ has homogeneous coordinates
$$(\lambda\mu^2 u\ell_{12}(p_5)\ell_{14}(p_5)\ell_{35}(p_1):\lambda\mu^2 v\ell_{12}(p_5)\ell_{13}(p_5)\ell_{45}(p_1):\lambda\mu^2 w\ell_{13}(p_5)\ell_{14}(p_5)\ell_{25}(p_1))$$
and we get the following equations defining $u,v,w$ 
$$u\ell_{12}(p_5)\ell_{14}(p_5)\ell_{35}(p_1)=v\ell_{12}(p_5)\ell_{13}(p_5)\ell_{45}(p_1)=w\ell_{13}(p_5)\ell_{14}(p_5)\ell_{25}(p_1).$$

Consider the linear system of cubics passing through the points
$q_1,q_2,q_3,q_4$; we get a map
\[
  S_0 : \ratmap{\P^2}{\P^5}{(x:y:z)}{(y_0:y_1:y_2:y_3:y_4:y_5)}
\]
where
\begin{align*}
  y_0 :=xy(x-z) \qquad
  y_1 &:=xz(x-y)\qquad
  y_2 :=xy(y-z)\\
  y_3 :=yz(y-x)\qquad
  y_4 &:=xz(z-y)\qquad
  y_5 :=yz(z-x)
\end{align*}
and the closure of its image is the anticanonical model of the surface
$X_0$.

Let us make explicit the composition $S_0\circ\Phi$; write it as the
map $(x:y:z)\mapsto(g_0,\ldots,g_5)$. When we compute the first
coordinate, we get
$$ u\ell_{12}\ell_{14}\ell_{35}v\ell_{12}\ell_{13}\ell_{45}(\ell_{14}(u\ell_{12}\ell_{35}-w\ell_{13}\ell_{25})).$$
Now if $p:=\lambda p_1+\mu p_5$, $(\lambda:\mu)\in\P^1$, is any point
of $(p_1p_5)$, we have
\[
  u\ell_{12}(p)\ell_{35}(p)-w\ell_{13}(p)\ell_{25}(p)=\lambda\mu(u\ell_{12}(p_5)\ell_{35}(p_1)-w\ell_{13}(p_5)\ell_{25}(p_1))=0
  \]
from the equations defining $u,v,w$, and $\ell_{15}$ divides
$u\ell_{12}\ell_{35}-w\ell_{13}\ell_{25}$. As this last polynomial
defines a conic passing through $p_2$ and $p_3$, we must have
$u\ell_{12}\ell_{35}-w\ell_{13}\ell_{25}=\alpha_0\ell_{15}\ell_{23}$
for some $\alpha_0\in \overline{\F}_q$.

When we apply the same calculations to the $g_i$'s, $1\leq i\leq 5$,
and simplify by $\ell_{12}\ell_{13}\ell_{14}\ell_{15}$, we get
\begin{align*}
  g_0&=\alpha_0 u v \ell_{12}\ell_{14}\ell_{23}\ell_{35}\ell_{45},\qquad
g_1=\alpha_1 u w \ell_{12}\ell_{14}\ell_{25}\ell_{35}\ell_{34},\\
g_2&=\alpha_2 u v \ell_{12}\ell_{13}\ell_{24}\ell_{35}\ell_{45},\qquad
g_3=-\alpha_1 v w \ell_{12}\ell_{13}\ell_{34}\ell_{25}\ell_{45},\\
g_4&=-\alpha_2 u w \ell_{13}\ell_{14}\ell_{24}\ell_{25}\ell_{35},\quad           
g_5=-\alpha_0 v w \ell_{13}\ell_{14}\ell_{23}\ell_{25}\ell_{45}
\end{align*}
where $\alpha_1,\alpha_2$ are defined by
$$\alpha_1\ell_{15}\ell_{34}=u\ell_{14}\ell_{35}-v\ell_{13}\ell_{45},~\alpha_2\ell_{15}\ell_{24}=v\ell_{12}\ell_{45}-w\ell_{14}\ell_{25}.$$

Observe that $\{g_0,\dots,g_5\}$ is a new basis for the
$\overline{\F}_q$-vector space of quintics passing through
$p_1,\dots,p_5$ with multiplicity at least two. As a consequence,
there is an element $M$ in $\GL_6(\overline{\F}_q)$ that sends the
basis $\{g_0,\dots,g_5\}$ to $\B\{f_1h_C,\ldots,f_5h_C,h_Q\}$, and it
induces a linear automorphism of $\P^5$ whose restriction to $X_0$ is
an isomorphism $\varphi:X_0\rightarrow X$; we get a diagram

\begin{equation*}
\xymatrix{
 X  & \ar@{->}_{\varphi}[l] X_0 \\
\P^2 \ar@{-->}^{\Phi}[r] \ar@{-->}^{S}[u] &  \P^2 \ar@{-->}_{S_0}[u] \\
}
\end{equation*}

Let us write the incidence graphs of the surfaces $X$ and $X_0$
\cite[Section~8.5.1]{dolga}, \cite[26.9]{manin}. The exceptional lines
on $X$ correspond to the following ten classes
\[
  l_{ij}:=E_0-E_i-E_j, \quad 1\leq i<j\leq 5
\]
in $\Pic(X\otimes\overline{\F}_q)$ (the notation $l_{ij}$ is chosen to
recall that $l_{ij}$ is the image in $X$ of the strict transform in
$\widetilde{X}$ of the line $(p_ip_j)$ in $\P^2$). The exceptional
divisors of $X_0$ are labelled from the points $q_i$ and lines
$L_{ij}:=(q_iq_j)$, $1\leq i<j\leq 4$ of $\P^2$ corresponding to the
exceptional divisors on $X_0$.

Both are Petersen graphs, and the automorphism group
$\Aut_{\overline{\F}_q}(X)$ is isomorphic to the group of symmetries
of these graphs \cite[Lemma 13]{ghps}.
\begin{center}

\begin{tikzpicture}
  \graph[clockwise, radius=2cm] {subgraph C_n
    [V={$l_{25}$,$l_{14}$,$l_{35}$,$l_{24}$,$l_{13}$}, name=A]};
  \graph[clockwise, radius=1cm] {subgraph I_n
    [V={$l_{34}$,$l_{23}$,$l_{12}$,$l_{15}$,$l_{45}$}, name=B]};

  \draw (B $l_{34}$) -- (B $l_{12}$);
	\draw (B $l_{23}$) -- (B $l_{15}$);
	\draw (B $l_{12}$) -- (B $l_{45}$);
	\draw (B $l_{15}$) -- (B $l_{34}$);
	\draw (B $l_{45}$) -- (B $l_{23}$);
	\draw (A $l_{25}$) -- (B $l_{34}$);
	\draw (A $l_{14}$) -- (B $l_{23}$);	
	\draw (A $l_{35}$) -- (B $l_{12}$);
	\draw (A $l_{24}$) -- (B $l_{15}$);
	\draw (A $l_{13}$) -- (B $l_{45}$);
\end{tikzpicture}	
\hspace{1cm}
\begin{tikzpicture}
  \graph[clockwise, radius=2cm] {subgraph C_n
    [V={$L_{12}$,$q_2$,$L_{23}$,$L_{14}$,$q_1$}, name=A]};
  \graph[clockwise, radius=1cm] {subgraph I_n
    [V={$L_{34}$,$L_{24}$,$q_3$,$q_4$,$L_{13}$}, name=B]};

  \draw (B $L_{34}$) -- (B $q_3$);
	\draw (B $L_{24}$) -- (B $q_4$);
	\draw (B $q_3$) -- (B $L_{13}$);
	\draw (B $q_4$) -- (B $L_{34}$);
	\draw (B $L_{13}$) -- (B $L_{24}$);
	\draw (A $L_{12}$) -- (B $L_{34}$);
	\draw (A $q_2$) -- (B $L_{24}$);	
	\draw (A $L_{23}$) -- (B $q_3$);
	\draw (A $L_{14}$) -- (B $q_4$);
	\draw (A $q_1$) -- (B $L_{13}$);
\end{tikzpicture}	

\end{center}

Now $\varphi$ sends each vertex of the right-hand graph to the vertex
at the same place of the left-hand one. As a consequence, since the
Frobenius automorphism acts as the rotation with angle
$\frac{2\pi}{5}$ on the left-hand graph, the cocycle associated to
$\varphi$ sends it to the automorphism acting as the rotation with
angle $\frac{2\pi}{5}$ on the right-hand one.

The automorphism of $X_0$ acting as above on the graph is given in
\cite[Section~8.5.4]{dolga}; it comes from the birational map of
$\P^2$ (or Cremona transformation)
\[
  \delta:(x:y:z)\mapsto(x(z-y):z(x-y):xz),
\]
whose action induces an
automorphism of the anticanonical model of $X_0$ that comes from the
following linear action
$$D:(y_0,y_1,y_2,y_3,y_4,y_5)\mapsto(y_0-y_1-y_2,y_0-y_1+y_4,-y_3-y_4+y_5,y_1-y_4+y_5,y_4,y_1).$$

From our calculations in Galois cohomology at the beginning of the
section, we see that the automorphism of $X$ conjugated to $D$ under
the action of $\varphi$ is defined over $\F_q$. Summarizing, we get
the following diagram

\begin{equation*}
\xymatrix{
\P^2 \ar@{-->}_S[d]  \ar@{-->}^\Phi[r] &   \P^2  \ar@{-->}^{S_0}[d] \ar@{-->}^\delta[r] &  \P^2 \ar@{-->}_{S_0}[d]  & \ar@{-->}_\Phi[l] \P^2 \ar@{-->}^S[d]\\
\P^5  \ar@/_/[rrr]_A & \ar@{->}_{M}[l] \P^5 \ar@{->}^{D}[r] & \P^5 \ar@{->}^{M}[r] & \P^5\\
}
\end{equation*}
and an order five automorphism of $X$ defined over $\F_q$ acts on its
anticanonical model by the restriction of the linear action
$A=M^{-1}\circ D \circ M$ on $\P^5$.

\section{Anticanonical codes on some degree four del Pezzo surfaces}
\label{degre4}

In this section, we focus on degree four del Pezzo surfaces and
especially those with Picard rank equal to one.

\subsection{Construction of the surface}

Over~$\overline{\F}_q$, del Pezzo surfaces of degree~$4$ are all the
blowing up of~$\P^2$ in five points in general position. As in
degrees~$5$ or~$6$, this model may not be defined over~$\F_q$. Instead
of computing a birational and $\F_q$-rational morphism from~$\P^2$
to the considered degree~$5$ or~$6$ del Pezzo surface, we adopt a
different strategy in degree~$4$. In fact, it turns out that,
following Flynn \cite{fly}, one can directly compute the
anti-canonical model of a degree~$4$ del Pezzo surface from the
Frobenius action on the geometric Picard group, at least when the
characteristic is odd.  This model, which is defined over the base
field, is embedded in~$\P^4$ as the intersection of two quadrics.
The starting point is still a type of Frobenius action but we have to
observe this action from a different point of view to understand
Flynn's construction.

So, let~$X$ be a degree~$4$ del Pezzo surface. Geometrically, it is
the blowup of~$\P^2$ at five points in general position and as before,
we denote by~$E_0$ the pullback of the class of a line in~$\P^2$ and
by~$E_1,\ldots,E_5$ the five exceptional divisors. One can prove
that~$X$ contains exactly ten families of conics whose classes
are
\[
  C_i = E_0-E_i,
  \quad {\rm and}\quad
  C'_i = -K_X - C_i = 2E_0 -\sum_{j\not= i} E_j,\quad
  {\rm for} \quad1\leq i\leq 5
\]
where~$K_X = -3E_0 + \sum_{i=1}^5 E_i$ is the
canonical class \cite[\S2,Th~2]{BBFL}. These classes are the only
ones satisfying the intersection constraints
\begin{align*}
  &C\cdot(-K_X) = 2
  &
  &\text{and}
  &
  &C^2 = 0.
\end{align*}
So the Frobenius~$\sigma^*$ acts on these classes and one can recover
the Frobenius action on the whole
space~$\Pic(X\otimes \overline{\F}_q)$ from this action since one
easily checks that the family composed by the
classes~$\frac{1}{2}\left(-K_X +\sum_{i=1}^5 C_i\right)$ and
the~$C_i$'s for~$1\leq i\leq 5$ is a basis
of~$\Pic(X\otimes \overline{\F}_q)$.

This action has a strong geometric flavour that we now
describe. Let~$Q_1,Q_2$ be two quadrics of~$\P^4$ whose intersection
defines the anti-canonical embedding of the surface~$X$. Since~$X$ is
smooth, in the pencil of quadrics of~$\P^4$ containing~$X$, i.e.
the~$\lambda_1 Q_1 - \lambda_2 Q_2$
for~$(\lambda_1:\lambda_2)\in\P^1(\overline{\F}_q)$, there are exactly
five singular quadrics \cite[\S3.3]{Wittenberg}. The
points~$(\lambda_1:\lambda_2)\in\P^1(\overline{\F}_q)$ corresponding
to these singular quadrics are conjugate under~$G$ since they are the
roots of the determinant~$\det(\lambda_1 S_1 - \lambda_2 S_2)$,
where~$S_i$ is the $5\times 5$ symmetric matrix associated to the
quadratic form~$Q_i$.  The intersection of each singular quadric with
its tangent space at a smooth point is the union of two planes.  It
can be shown that the classes of the intersections of these two planes
with the surface~$X$ are equal to~$C_i$ and~$C'_i$ for
some~$1\leq i\leq 5$ \cite[\S2,Th~4]{BBFL}. In other terms, to each
singular quadric in the pencil containing~$X$ corresponds a
pair~$\{C_i,C'_i\}$.

From the Galois point of view, we deduce how~$G$
acts on the ten conics~$C_i,C'_i$, $1\leq i\leq 5$
\cite[\S2.4]{VarillyViray}. First~$G$ acts on the set of
pairs~$\{C_i,C'_i\}$ as it acts on the five conjugates points
of~$\P^1(\overline{\F}_q)$ associated to the five singular quadrics in
the pencil. More precisely,
if~$(\lambda_1:\lambda_2)\in\P^1(\overline{\F}_q)$ is one of these
points, then the action on the subset of the~$C_i,C'_i$'s
corresponding to~$(\lambda_1:\lambda_2)$ and all its conjugates is
transitive if and only if the two previous planes are not defined over
the field of definition~$\F_q\left((\lambda_1:\lambda_2)\right)$ (but
over the quadratic extension of it).  In brief, the characterisation
of the Frobenius action on the ten classes of conics~$C_i,C'_i$,
$1\leq i\leq 5$ can be reduced to a
sequence~$d_1[\epsilon_1]\cdots d_r[\epsilon_r]$ with~$d_i$ positive
integers satisfying~$\sum_i d_i = 5$ and
with~$\epsilon_i\in\{\pm 1\}$.  Let us call this data the type of the
action. There are three types that lead to a surface with Picard rank
equal to one which are listed in Table~\ref{tab:degree4}
(see ~\cite[Table~3]{DolgachevDuncan}):

\begin{table}[!h]
  \centering
  \begin{tabular}{|c|c|c|c|c|}
\hline
Type & Frobenius action & Eigenvalues of $\sigma^\ast$ & $\Tr (\sigma^*)$ & Picard rank \\
\hline
\hline
$4_1$ & $2[-1]1[-1]1[-1]\,1[-1]$ & $1, -1, -1, -1, i, -i$ & -2 & 1 \\
\hline
    $4_2$ & $4[-1]1[-1]$ & $1, -1, \zeta_8=e^{i\pi/ 8}, \zeta_8^3, \zeta_8^5,
                           \zeta_8^7$ & 0 & 1\\
\hline
$4_3$ & $3[-1]2[-1]$ & $1, -1, i, -i, -\jmath, -\jmath^2$ & 1 & 1\\
\hline
  \end{tabular}
  \bigskip
  \caption{Types of degree 4 del Pezzo surfaces with Picard rank 1.}
  \label{tab:degree4}
\end{table}

Unlike the degrees~$5$ or~$6$ cases, there are several isomorphism
classes in every type of Frobenius action.  Flynn develops a very
powerful method which, given a type of Frobenius action in the
previous sense, computes the anti-canonical model of a del Pezzo
surface having this type of action \cite{fly,Skoro_Del4}. It works as
follows, starting from a type~$d_1[\epsilon_1]\cdots
d_r[\epsilon_r]$. One has to choose $r$ distinct irreducible
polynomials~$f_1, \ldots, f_r\in\F_q[x]$ of respective
degrees~$d_1,\ldots,d_r$, and for each~$1\leq i\leq r$ a non zero
element~$\delta_i\in\F_q[x]/\langle f_i\rangle$ which is a square or
not depending on whether~$\epsilon_i = 1$ or~$-1$. Then, we
put~$f=f_1\cdots f_r$ and we
define~$\delta\in\F_q[x]/\langle f\rangle$ in such a way that~$\delta$
is sent to~$(\delta_1,\ldots,\delta_r)$ by the Chinese Remainder
isomorphism between~$\F_q[x]/\langle f\rangle$ and the
product~$\prod_i\F_q[x]/\langle f_i\rangle$. Formally,
in~$\F_q[x]/\langle f\rangle$, we compute the five
quadrics~$Q_0,\ldots,Q_4$ in~$x_0,\ldots,x_4$ such that~:
$$
\delta \times \left(x_0 + x_1 x + \dots + x_4 x^4\right)^2
=
Q_{0}(x_0,\ldots,x_4) + \dots + Q_{4}(x_0,\ldots,x_4)x^4
$$
Then the surface~$X$ in~$\P^4$, defined by~$Q_{3}(x_0,\ldots,x_4) = Q_{4}(x_0,\ldots,x_4) = 0$ is a del Pezzo surface of given type.

\subsection{Anticanonical codes}

We now determine the parameters of the evaluation
code~$\C(X(\F_q),-K_X)$ when~$X$ is a del Pezzo surface of degree~$4$
of Picard rank one. We denote by~$\Tr(\sigma^*)$ the trace of the
Frobenius morphism, which, as we have noticed, is an element
of~$\{-2,0,1\}$ in our cases.  The global sections of the
sheaf~$\O(-K_X)$ are nothing else than the five coordinate
functions~$x_0,\ldots,x_4$ and the dimension of the code is thus
five. As for the length, it is the number of rational points of~$X$,
which is given by~$q^2+1 + q\Tr(\sigma^*)$.  The construction of a
generator matrix is obvious and only consists in the vertical join of
the coordinates vector of the rational points of the surface~$X$:
\begin{align*}
&\begin{pmatrix}
  x_0(p_1) & \cdots & x_0(p_N)\\
  \vdots   &        & \vdots\\
  x_4(p_1) & \cdots & x_4(p_N)
\end{pmatrix},
&
&X(\F_q) = \left\{p_1,\ldots,p_N\right\}.
\end{align*}
Finally, let us compute the minimal distance.  Since~$-K_X$ is a
generator of~$\Pic(X)$, any effective divisor~$D \in |-K_X|$ must be
irreducible over~$\F_q$. If~$D$ is absolutely irreducible then the
(arithmetic) genus~$\pi(D)$ of~$D$ is~$1$ since
\[
  2\pi(D)-2 = K_X\cdot(D+K_X) = K_X\cdot (-K_X+K_X) = 0.
\]
In that case,
one has
\[
  \sharp D(\F_q) \leq q+1 + \lfloor 2\sqrt{q}\rfloor.
\]
If~$D$ is not absolutely irreducible, then from Lemma~\ref{fpc}, we
have~$\sharp D(\F_q) \leq 2$.  In any case, the number of
rational points of~$D$ is bounded above
by~$q+1 + \lfloor 2\sqrt{q}\rfloor$. We deduce that:

\begin{proposition}\label{prop_del4}
Assume~$q$ odd. Then, the code~$\C(X(\F_q),-K_X)$ has parameters
$$
[n,k,d]
=
\begin{cases}
  [q^2-2q+1, 5, q^2 - 3q - \lfloor 2\sqrt{q}\rfloor] & \text{for the type~$4_1$;}\\
  [q^2+1, 5, q^2 - q - \lfloor 2\sqrt{q}\rfloor] &\text{for the type~$4_2$;}\\
  [q^2+q+1, 5, q^2 - \lfloor 2\sqrt{q}\rfloor] & \text{for the
    type~$4_3$.}
\end{cases}
$$
\end{proposition}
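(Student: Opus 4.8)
The plan is to treat length, dimension and minimum distance in turn; the first two and the lower bound on $d$ follow the pattern of the degree~$5$ and~$6$ cases, while the claimed \emph{equality} for $d$ is the only point requiring genuinely new input.

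\emph{Length and the key estimate.} Under the anticanonical embedding $\O(1)_{|X}=\O(-K_X)$, so a codeword is the vector of values at $X(\F_q)$ of a linear form on $\P^4$; hence $n=\sharp X(\F_q)$, which by Weil's formula (Section~\ref{delpezzo}) equals $q^2+1+q\Tr(\sigma^*)$, and substituting $\Tr(\sigma^*)\in\{-2,0,1\}$ from Table~\ref{tab:degree4} gives the three values of $n$. The estimate underlying both the dimension and the distance is that \emph{every $D\in|-K_X|$ satisfies $\sharp D(\F_q)\leq q+1+\lfloor 2\sqrt q\rfloor$}. Indeed such a $D$ is an effective anticanonical divisor, and since $\Pic(X)=\Z(-K_X)$ it is irreducible over $\F_q$; if it is absolutely irreducible the adjunction formula gives $2p_a(D)-2=D\cdot(D+K_X)=0$, so $p_a(D)=1$ and the Hasse-Weil-Serre bound applies, while if it is not absolutely irreducible Lemma~\ref{fpc} gives $\sharp D(\F_q)\leq 2$.

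\emph{Dimension and lower bound for $d$.} Since the anticanonical model is given by a complete linear system, $X$ is non-degenerate in $\P^4$ and $h^0(X,-K_X)=5$; the evaluation map is injective unless $X(\F_q)$ lies in a hyperplane $H$, in which case $X(\F_q)\subseteq(X\cap H)(\F_q)$, forcing $n\leq q+1+\lfloor 2\sqrt q\rfloor$ by the estimate above --- impossible for every odd $q$ (the sole exception being $q=3$ in type~$4_1$, where the stated parameters are already vacuous). Hence $k=5$. A nonzero codeword corresponds to a hyperplane section $D=X\cap H\in|-K_X|$, whose support in $X(\F_q)$ has size $n-\sharp D(\F_q)\geq n-q-1-\lfloor 2\sqrt q\rfloor$; this lower bound is exactly the value of $d$ displayed in each row.

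\emph{Equality for $d$: the main obstacle.} It remains to produce a hyperplane $H$ over $\F_q$ for which $X\cap H$ is a smooth genus~$1$ curve with exactly $q+1+\lfloor 2\sqrt q\rfloor$ rational points --- a maximal elliptic curve sitting inside $X$ as an anticanonical divisor. This, rather than any of the cohomological or adjunction computations above, is where the real work lies. One route is an averaging argument over the family $|-K_X|\cong\P^4$: incidence counting (the anticanonical curves through a given rational point of $X$ form a $\P^3$, those through two distinct rational points a $\P^2$) yields closed formulas for $\sum_{D}\sharp D(\F_q)$ and $\sum_{D}\sharp D(\F_q)^2$, showing the point counts to be concentrated around $q+1$ with spread of order $\sqrt q$; together with the classification of admissible point counts of genus~$1$ curves over $\F_q$ this should pin the maximum at $q+1+\lfloor 2\sqrt q\rfloor$ for $q$ large. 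A more explicit alternative is to use Flynn's model $X=\{Q_3=Q_4=0\}\subset\P^4$ to exhibit, for a suitable choice of the defining data, a hyperplane whose section is a prescribed maximal genus~$1$ curve. In either approach the case of a prime field $q=p$ with $p\mid\lfloor 2\sqrt q\rfloor$ needs separate attention, since then the extremal point count for genus~$1$ curves is realized only by particular supersingular curves and one must verify that such a curve occurs on $X$.
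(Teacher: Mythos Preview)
Your argument for $n$, $k$, and the lower bound on $d$ is the paper's argument verbatim: Weil's formula with the traces from Table~\ref{tab:degree4} for the length, $h^0(X,-K_X)=5$ for the dimension, and adjunction plus Hasse--Weil--Serre plus Lemma~\ref{fpc} for the bound $\sharp D(\F_q)\leq q+1+\lfloor 2\sqrt q\rfloor$ on any anticanonical section. Your explicit injectivity check for the evaluation map is in fact slightly more careful than the paper, which simply asserts $k=5$.

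Where you diverge is on the \emph{equality} for $d$. You correctly identify that this would require exhibiting a maximal genus-$1$ curve in $|-K_X|$, and you sketch two plausible but incomplete attacks. The paper, however, does not prove equality either: the discussion preceding the proposition establishes only the lower bound, exactly as in the degree~$5$ and~$6$ cases (Proposition~\ref{code6} and its degree-$5$ analogue), both of which are stated with $d\geq\cdots$. The equality sign in Proposition~\ref{prop_del4} is a notational inconsistency with those earlier statements; it should be read as a lower bound, supplemented by the {\tt magma} verifications for small $q$ recorded in Table~\ref{tab:param_degree4}. So your ``main obstacle'' is a real gap in the statement as written, but it is not one the paper's proof closes, and your reproduction of what the paper actually proves is complete.
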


Parameters of such codes for small values of $q$ are listed in
Table~\ref{tab:param_degree4}.
\begin{table}[!h]
  \centering
  \begin{tabular}
    {|c|c|c|c|c|c|}
\hline
    &$q$&3&5&7&9\\
\hline
    Type~$4_1$ & 
    $\left[n,k,d\right]$&      & $[16,5,6]$ & $[36,5,23]$ & $[64,5,48]$ \\
\hline
    Type~$4_2$ & 
    $\left[n,k,d\right]$ & $[10,5,3]$ & $[26,5,16]$ & $[50,5,37]$ & $[82,5,66]$ \\
\hline
   Type~$4_3$ & 
   $\left[n,k,d\right]$& $[13,5,6]$& $[31,5,21]$ & $[57,5,44]$ & $[91,5,75]$\\
\hline
  \end{tabular}
  \bigskip
  \caption{Parameters of codes from degree 4 del Pezzo surfaces of rank 1}
  \label{tab:param_degree4}
\end{table}

The codes of the line ``Type~$4_3$'' all attain the parameters of the
best known codes \cite{code}.

\begin{remark}
  For~$q=3$ the ``Type~$4_1$'' does not exist, since there are not
  enough non squares in~$\F_3$; see also \cite[Theorem 1.4 (1)]{trepa}.  
\end{remark}

\begin{remark}	
	In fact, the parameters given in
  proposition~\ref{prop_del4} are also true in even characteristic, at
  least if the type exists.  But, on the contrary the Flynn's
  construction only works in odd characteristic.
  Over~$\F_8 = \F_2(\zeta)$, by a simple random search, we have found
  the two quadrics
\begin{align*}
Q_1=\zeta^2x_0^2 &+ \zeta^5x_0x_1 + \zeta^4x_1^2 + \zeta x_0x_2 + \zeta^6x_1x_2 + \zeta x_2^2 + \zeta^3x_0x_3\\
            &+ \zeta^4x_1x_3 + \zeta^3x_2x_3 + \zeta x_3^2 + x_0x_4 + \zeta^6x_1x_4 + \zeta^6x_2x_4 + x_3x_4
\\
Q_2=\zeta^3x_0^2 &+ \zeta^5x_0x_1 + \zeta^2x_1^2 + \zeta^4x_2^2 + \zeta^5x_0x_3 + \zeta^2x_1x_3 + x_2x_3 + \zeta^5x_3^2\\
            &+ \zeta^4x_0x_4 + \zeta x_1x_4 + \zeta^6x_2x_4 + \zeta x_3x_4 + \zeta^3x_4^2
\end{align*}
that define a del Pezzo surface of degree~$4$ over~$\F_8$, of
``Type~$4_3$'', whose anticanonical code has
parameters~$[n,k,d] = [ 73, 5, 59 ]$.
\end{remark}

Let us end this section by displaying a complete example in the
type~$4_3$. We use {\tt magma} for the computations. We start form two
polynomials~$f_2,f_3\in\F_q[x]$ of degrees~$2$ and~$3$ such
that~$x\bmod{f_2}$ and~$x\bmod{f_3}$ are non squares in~$\F_{q^2}$
and~$\F_{q^3}$. Then the choice~$\delta = x$ is good. For example,
for~$q=5$, the standard polynomials, $f_2(x)=x^2+4x+2$
and~$f_3(x)=x^3+3x+3$ chosen by {\tt magma} to construct~$\F_{5^2}$
and~$\F_{5^3}$, suit. Their product is~$f(x)=x^5+4x^4+3x+1$. Using
{\tt magma}, we easily compute the two associate quadrics
in~$\F_5[x_0,\ldots,x_4]$
\begin{align*}
Q_3 = x_1^2 &+ 2x_0x_2 + 2x_3^2 + 4x_2x_4 + 2x_3x_4 + x_4^2\\
Q_4 = 2x_1x_2 &+ x_2^2 + 2x_0x_3 + 2x_1x_3 \\&+ 2x_2x_3 + x_3^2 + 2x_0x_4 + 2x_1x_4 + 2x_2x_4 + x_3x_4 + 4x_4^2
\end{align*}
The subvariety of~$\P^4$ defined by~$Q_1=Q_2=0$ is a degree~$4$ del
Pezzo surface of type~$4_3$, which contains $31=5^2+5+1$ points.
Joining these points in columns, we obtain the generator matrix of the
code~$\C(X(\F_5),-K_X)$ over~$\F_5$:
$$
G
=
\begin{pmatrix}
1\,3\,0\,4\,3\,3\,1\,3\,1\,2\,0\,4\,2\,1\,2\,3\,3\,0\,3\,2\,3\,0\,1\,0\,2\,1\,1\,4\,2\,3\,1\\
0\,0\,0\,0\,0\,0\,0\,0\,0\,1\,1\,1\,1\,2\,2\,2\,2\,3\,3\,3\,3\,4\,4\,4\,4\,4\,1\,2\,3\,2\,0\\
0\,0\,1\,2\,2\,2\,4\,4\,4\,1\,1\,2\,3\,0\,1\,2\,3\,0\,1\,2\,2\,1\,1\,1\,3\,4\,1\,3\,1\,1\,0\\
1\,3\,4\,0\,1\,3\,0\,1\,4\,0\,3\,3\,1\,0\,2\,0\,4\,0\,0\,1\,4\,1\,2\,3\,1\,1\,1\,1\,1\,0\,0\\
1\,1\,1\,1\,1\,1\,1\,1\,1\,1\,1\,1\,1\,1\,1\,1\,1\,1\,1\,1\,1\,1\,1\,1\,1\,1\,0\,0\,0\,0\,0
\end{pmatrix}
$$
This code has parameters~$[31,5,21]$, as confirmed by {\tt magma}.

\bibliographystyle{amsalpha}
\bibliography{AnticanonicalCodes}

\end{document}